\documentclass[11pt,a4paper]{article}
\usepackage[T1]{fontenc}
\usepackage{lmodern}
\usepackage[margin=25mm]{geometry}
\usepackage{amsmath,amssymb,amsthm,mathtools,booktabs,array}
\usepackage{xcolor}
\definecolor{linkblue}{RGB}{0,76,153}
\usepackage{microtype}
\usepackage{enumitem}
\usepackage[hidelinks]{hyperref}
\hypersetup{pdftitle={Unsupported Cyclotomic Divisors in Three-Prime Integer Tilings},
 pdfauthor={Hu Tan and Ying Zhang}}
\allowdisplaybreaks[2]
\newtheorem{theorem}{Theorem}[section]
\newtheorem{lemma}[theorem]{Lemma}
\newtheorem{corollary}[theorem]{Corollary}
\newtheorem{proposition}[theorem]{Proposition}
\theoremstyle{definition}

\newtheorem{remark}[theorem]{Remark}
\newcommand{\ZZ}{\mathbb Z}
\newcommand{\QQ}{\mathbb Q}

\newcommand{\NN}{\mathbb Z_{\geq0}}
\newcommand{\E}{\mathcal E}
\newcommand{\Ap}{\operatorname{Ap}}
\newcommand{\Stab}{\operatorname{Stab}}
\newcommand{\supp}{\operatorname{supp}}
\newcommand{\CT}{\operatorname{CT}}
\newcommand{\ind}{\mathbf 1}
\newcommand{\ii}{\mathrm i}
\newcommand{\br}[1]{\{0,\ldots,#1-1\}}
\title{Unsupported Cyclotomic Divisors\\in Three-Prime Integer Tilings}
\author{\begin{tabular}[t]{c}
Hu Tan\textsuperscript{1}\\[3pt]
\small\textsuperscript{1}Academy of Mathematics and Systems Science\\
\small Chinese Academy of Sciences\\
\small Beijing 100190, China\\
\small\texttt{tanhu2020@amss.ac.cn}
\end{tabular}
\and
\begin{tabular}[t]{c}
Ying Zhang\textsuperscript{2}\\[3pt]
\small\textsuperscript{2}School of Mathematical Sciences\\
\small Soochow University\\
\small Suzhou 215006, China\\
\small\texttt{yzhang@suda.edu.cn}
\end{tabular}}
\date{}
\begin{document}
\maketitle

\begin{abstract}
Cyclotomic divisibility imposes strong prime-power structure on integer tiles. We study unsupported cyclotomic divisors: mixed-order divisors for which none of the prime-power components of the order divides the mask, although every prime in the order divides the tile cardinality. Kiss, \L aba, Marshall and Somlai asked whether such a phenomenon can occur in the three-prime setting.
We prove that unsupported cyclotomic divisors already occur for periods with three distinct prime factors. For primes \(p<q<r\), we characterize the square-period case: an unsupported factor \(\Phi_{pqr}\) occurs in a tiling of \(\ZZ_{(pqr)^2}\) if and only if \(r\in\langle p,q\rangle\), and every such tile lies in a single residue class modulo \(r\). Among cyclic tilings with the unsupported order dividing the specified modulus, the smallest modulus is \(180\); if the order has three distinct prime factors, it is \(900\). An Ap\'ery-set construction gives examples for every triple at period \(p^2q^2r^3\).
The proof of our results combines Fourier rigidity, a three-cylinder decomposition, and an integer mass obstruction. 

\end{abstract}

\noindent\textit{2020 Mathematics Subject Classification.} Primary 11B75; Secondary 11R18, 52C22.\\
\noindent\textit{Key words and phrases.} Integer tilings, cyclotomic divisors, numerical semigroups, Ap\'ery sets, Fourier analysis.

\section{Introduction and main results}
The cyclotomic zeros of an integer tile describe its distribution across residue classes. The Coven--Meyerowitz conditions link zeros at prime-power orders to zeros at their mixed products. A converse implication would impose additional rigidity: a mixed-order zero, whose prime divisors all divide the tile cardinality, would require a zero at one of its prime-power components. We study the failure of this implication for periods with three distinct prime factors, and classify the structure forced when each prime occurs to exponent two.

For a finite set $A\subset\NN$, write
\[
 A(X)=\sum_{a\in A}X^a,\qquad [n]_X=1+X+\cdots+X^{n-1}.
\]
We use the same notation for the mask of a subset of $\ZZ_M$, always taking its representatives in $\br M$. Let $\Phi_s$ be the $s$th cyclotomic polynomial and put
\[
 S_A=\{p^\alpha:p\text{ prime},\ \alpha\ge1,\ \Phi_{p^\alpha}\mid A(X)\}.
\]
The Coven--Meyerowitz conditions are
\begin{align*}
 \mathrm{(T1)}\quad &|A|=\prod_{s\in S_A}\Phi_s(1),\\
 \mathrm{(T2)}\quad &\Phi_{s_1\cdots s_j}\mid A(X)
 \quad\text{whenever }s_1,\ldots,s_j\in S_A
 \text{ are powers of distinct primes}.
\end{align*}
Coven and Meyerowitz proved that (T1) and (T2) are sufficient for tiling the integers, that (T1) is necessary, and that (T2) is necessary when the tile cardinality has at most two distinct prime factors \cite{CM}.

Following Kiss, \L aba, Marshall and Somlai \cite[Definition 1.5]{KLMS}, a divisor $\Phi_s\mid A$, with $s>1$, is \emph{unsupported} if every prime dividing $s$ divides $|A|$, but
\[
 \Phi_{p^\alpha}\nmid A\qquad\text{for every }p^\alpha\parallel s.
\]
When $A$ is considered in a specified cyclic group $\ZZ_M$, all unsupported orders discussed below are required to divide $M$. This restriction makes the relevant cyclotomic divisibility independent of the choice of representatives. The signature $S_A$ remains the full prime-power signature of the ordinary mask polynomial.
In \cite[Question 1.7]{KLMS}, the authors ask whether a set satisfying (T1) and (T2) can have an unsupported divisor. They give a four-prime construction and leave the three-prime case undecided in Section 10.2. Their Theorem 1.12 excludes such divisors when the period has at most two distinct prime factors.

We answer the three-prime existence question and determine exactly when the full three-prime order can occur at square period. Write
\[
 N=pqr,\qquad M=N^2,
\]
where $p<q<r$ are distinct primes, and define
\begin{equation}\label{eq:E}
 \E(p,q,r)=\{A\subset\ZZ_M:|A|=N,\ \Phi_{d^2}\mid A\ (1<d\mid N),\ \Phi_N\mid A\}.
\end{equation}
The definition is symmetric in $p,q,r$; we retain the same notation when discussing a permutation of the three primes. The three squared prime-power factors exhaust the cardinality at $X=1$. Thus every member has $S_A=\{p^2,q^2,r^2\}$, satisfies (T1), (T2), and has unsupported $\Phi_N$. Section~\ref{sec:fourier} gives a direct tiling complement.

A set $A\subset\ZZ_M$ is \emph{confined modulo $s$}, for a prime $s\mid M$, if it is contained in one residue class modulo $s$. This is different from having a nonzero translation period.

\begin{theorem}[Square-period rigidity and existence]\label{thm:main}
Let $p<q<r$ be distinct primes. Every member of $\E(p,q,r)$ is confined modulo $r$. Moreover,
\begin{equation}\label{eq:maincriterion}
 \E(p,q,r)\ne\varnothing
 \quad\Longleftrightarrow\quad
 r\in\langle p,q\rangle
 :=\{ap+bq:a,b\in\NN\}.
\end{equation}
\end{theorem}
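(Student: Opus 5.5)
We will work with the Fourier description of $A\subset\ZZ_M$, $M=N^2$. The conditions $\Phi_{d^2}\mid A$ for all $1<d\mid N$ say that $A$ is equidistributed in a strong sense at the "square" scales. Concretely, $\Phi_{p^2},\Phi_{q^2},\Phi_{r^2},\Phi_{p^2q^2},\dots,\Phi_{N^2}\mid A$ together with $|A|=N$ force $A$ to contain exactly one element in each residue class modulo $N$. Equivalently, $A$ is a graph $\{x+N f(x): x\in\br N\}$ with $f:\ZZ_N\to\ZZ_N$. This is the standard fact that the divisors $\Phi_{s}$ for all $s\mid M$ with $s\nmid N$ and $N\mid s$, combined with $|A|=N$, characterise complete residue systems. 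We only need the consequence: $A\bmod N$ is a bijection onto $\ZZ_N$.

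\textbf{Reduction to a mass equation.} For $\Phi_N\mid A$ we evaluate $A$ at a primitive $N$th root $\zeta$. Then $A(\zeta)=\sum_{x\in\br N}\zeta^{x}$ weighted by the representative $x+Nf(x)$, where $\zeta^{Nf(x)}=1$. At first sight this gives a complete sum, hence zero automatically. However, the mask uses representatives in $\br M$, and the relevant sum is over $A$ itself, so this is fine. The real content must therefore come from the interplay with $\Phi_{p^2}$ and its relatives. We will recompute more carefully using the three-cylinder decomposition $\ZZ_M\cong \ZZ_{p^2}\times\ZZ_{q^2}\times\ZZ_{r^2}$. Write each element as $(u,v,w)$. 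The divisor $\Phi_{p^2}$ says the multiset of $u$'s is equidistributed over the cosets of $p\ZZ_{p^2}$ in the $p$-digit, and similarly for $q$ and $r$.

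By the structure theorem for vanishing sums of roots of unity of order $pqr$ (de Bruijn / Lam--Leung), $\Phi_N\mid A$ means the projection of $A$ to $\ZZ_p\times\ZZ_q\times\ZZ_r$ has a weight function that is a nonnegative integer combination of fibres in the three directions: $\ind_{A \bmod N}=\sum P_i+\sum Q_j+\sum R_k$, modulo a simple linear relation. Comparing total mass $N$ with fibre sizes $p,q,r$ gives $N=ap+bq+cr$-type identities. Combining these with the prime-power conditions at the top digits, which say the $\ZZ_p,\ZZ_q,\ZZ_r$ marginals are uniform, we will show that the $r$-direction fibre coefficients must vanish. The key point is that an $r$-fibre, counted against the $p$-marginal after fixing the top digits, produces a mass that is not divisible appropriately. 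This is the integer mass obstruction. We plan to argue it by counting, for a fixed residue modulo $pq$, the number of elements of $A$ in each class modulo $r$. Vanishing of $r$-fibres then yields confinement modulo $r$. Once $A$ is confined modulo $r$, the $r$-marginal condition $\Phi_{r^2}$ lives on the next digit. Requiring the remaining decomposition to be into $p$- and $q$-fibres of total mass exactly $r$ within each relevant slice gives $r=ap+bq$, i.e.\ $r\in\langle p,q\rangle$. This settles necessity.

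\textbf{Sufficiency.} Given $r=ap+bq$, we construct $A$ explicitly. We confine $A$ to $0\bmod r$. Inside each of the $pq$ slices we place a union of $a$ $p$-fibres and $b$ $q$-fibres in the $\ZZ_{pq}$-coordinates, disjoint and of total size $r$. We then lift digits to realise each of the square conditions via a product-type choice of higher digits (an Apéry-set-style shift). The $\Phi_{d^2}$ conditions are then checked factor by factor, using that the high digits range over full residue systems, and $\Phi_N$ follows from the fibre decomposition.

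\textbf{Main obstacle.} The main obstacle is the necessity direction: proving that no $r$-fibres can occur. Fibre decompositions of vanishing sums with three primes are not unique, and negative coefficients can appear. We would first try to reduce modulo $pq$ and apply the two-prime rigidity (KLMS Theorem 1.12) to each $r$-slice.
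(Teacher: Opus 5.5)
Your opening structural claim is false, and it causes the confusion that follows. The zeros $\Phi_{d^2}\mid A$, $1<d\mid N$, do not characterize a complete residue system modulo $N$; that requires $\Phi_d\mid A$ for $1<d\mid N$. Those zeros are in fact excluded here, because $\Phi_{p^2}\Phi_{q^2}\Phi_{r^2}$ already accounts for the value $A(1)=N$. The theorem you are proving says the opposite of your claim: every member of $\E(p,q,r)$ lies in a single class modulo $r$. The example \eqref{eq:basic900}, for instance, is contained in $5\ZZ_{900}$. Your own remark that the claim would make $\Phi_N\mid A$ automatic was the warning sign.

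For the same reason the $\ZZ_p,\ZZ_q,\ZZ_r$ marginals are not uniform: $\Phi_{p^2}\mid A$ only equidistributes the high $p$-digit within each class modulo $p$. What the seven square-order zeros actually give, via column orthogonality of the $N\times N$ Fourier matrix with frequencies in $\br p\times\br q\times\br r$, is the normal form of Theorem~\ref{thm:normalform}. Each high-digit triple $(u,v,w)$ carries exactly one point, that point's residue modulo $N$ is $(f(v,w),g(u,w),h(u,v))$, and any two points satisfy the separation condition (K). Confinement modulo $r$ means $h$ is constant, and all the arithmetic has to be done on these three functions.

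Beyond this, your necessity direction is a plan rather than an argument, and its steps would not go through. The order-$pqr$ fibre decomposition need not be nonnegative, as you note. Even if every $r$-fibre coefficient vanished, $p$- and $q$-fibres over different residues modulo $r$ would remain possible, so confinement would not follow from that alone. The KLMS two-prime theorem has nothing to act on: reducing modulo $pq$ gives a multiset of $pqr$ points, not a tile of a two-prime cyclic group. Also, the conclusion singles out the largest prime, so a correct proof must use $p<q<r$, and your heuristic never does.

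The paper argues in two stages. First, if none of $f,g,h$ is constant, (K) forces three disjoint cylinder supports (Lemma~\ref{lem:cylinders}). A basis of $\QQ(\zeta_{pqr})$ then forces a common multiplicity on each axis, giving positive integers with $p^2n+q^2m+r^2k=pqr$, plus capacity bounds from the cylinders. The mass identity alone is not contradictory: for $(p,q,r)=(5,11,13)$ it has the solution $(n,m,k)=(17,1,1)$. It is the capacity bounds that exclude it, via the three-rectangle AM--GM inequality for $p\ge5$ and short case arguments for $p=2,3$. This yields confinement modulo \emph{some} prime.

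Second, with $h$ constant, each $w$-layer has $f_w$ or $g_w$ constant (Lemma~\ref{lem:2D}). So the matrix $W_{ij}$ counting residues modulo $pq$ has entries in $\langle p,q\rangle$, row sums divisible by $p$, and column sums divisible by $q$. The $\Phi_N$ zero, now reduced to $\sum_{i,j}W_{ij}\zeta_p^i\zeta_q^j=0$, makes $W$ additive, $W_{ij}=t+p\alpha_i+q\beta_j$, and these relations give $r\in\langle p,q\rangle$. Your ``mass exactly $r$ in each slice'' is not available. Total mass only gives the vacuous $pqr\in\langle p,q\rangle$, the distribution modulo $pq$ need not be uniform, and the $r$ points over a fixed high pair $(u,v)$ need not form a vanishing sum. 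With $p$ or $q$ in place of $r$, the same criterion would require $p\in\langle q,r\rangle$ or $q\in\langle p,r\rangle$, both impossible; this is where the ordering enters.

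Your sufficiency sketch is essentially the layer construction in Theorem~\ref{thm:aperiodicgeneral}: $r$ layers whose constant low pairs run through $a$ copies of a $p$-cycle and $b$ copies of a $q$-cycle, each layer a full high-digit rectangle. It works only when read with multiplicity. A $p$-fibre and a $q$-fibre in $\ZZ_p\times\ZZ_q$ always meet, and $a,b\ge1$, so they cannot be disjoint; the points are separated by their high $w$-digits instead. The paper's own sufficiency proof uses the Ap\'ery mask of Theorem~\ref{thm:construction} with $L=r$, $e=1$.
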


The finite-group formulation does not impose an additional hypothesis on the intended tiling problem. The balanced square-period theorem of \L aba and Londner was proved for odd primes in \cite[Theorem 1.2]{LLodd} and for the even case in \cite[Theorem 1.2]{LLeven}; the combined statement is also recorded in \cite[Theorem 1.3]{LL}. Together with prime-power allocation, it gives the following consequence. The reduction is recorded in Section~\ref{sec:prelim}.

\begin{corollary}[Cyclic tilings]\label{cor:tiling-main}
There exists a tiling $A\oplus B=\ZZ_{(pqr)^2}$ for which $\Phi_{pqr}$ is unsupported for $A$ if and only if $r\in\langle p,q\rangle$. In every such tiling,
\[
 |A|=|B|=pqr,\qquad S_A=\{p^2,q^2,r^2\},\qquad S_B=\{p,q,r\},
\]
both factors satisfy (T1) and (T2), and $A$ is confined modulo $r$.
\end{corollary}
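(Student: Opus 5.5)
We reduce the corollary to Theorem~\ref{thm:main}. The key input is the balanced square-period theorem of \L aba and Londner \cite[Theorem 1.3]{LL}, together with the standard prime-power allocation for tilings of cyclic groups. We take these as the reduction recorded in Section~\ref{sec:prelim}.

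\emph{Step 1: cardinalities.} Suppose $A\oplus B=\ZZ_M$ with $M=(pqr)^2$. Then $|A||B|=M$, so
\[
 A(X)B(X)\equiv [M]_X \pmod{X^M-1}.
\]
Hence every $\Phi_s$ with $1<s\mid M$ divides $A$ or $B$. Each of $\Phi_p,\Phi_{p^2}$ (and likewise for $q,r$) is allocated to exactly one factor. By (T1), applied to both factors via the Coven--Meyerowitz necessity, the $p$-part of $|A|$ is $p^{\#\{\alpha:\Phi_{p^\alpha}\mid A\}}$.

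Now let $\Phi_{pqr}$ be unsupported for $A$. Then $p,q,r$ all divide $|A|$, and none of $\Phi_p,\Phi_q,\Phi_r$ divides $A$. For each prime, the prime divides $|A|$, so at least one of $\Phi_{p},\Phi_{p^2}$ lies in $A$; since $\Phi_p$ does not, it is $\Phi_{p^2}$. Thus $\Phi_p\mid B$, and $p^2\nmid |A|$ because only one $p$-power cyclotomic divides $A$. The same holds for $q$ and $r$. We conclude
\[
 |A|=|B|=pqr,\qquad S_A=\{p^2,q^2,r^2\},\qquad S_B=\{p,q,r\}.
\]
One point needs checking: $S_A$ is defined through the ordinary mask, whose higher prime-power factors $\Phi_{p^\alpha}$ with $\alpha\ge3$ do not divide $M$. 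Such factors cannot divide $A$ without inflating $|A|$ beyond what (T1) permits, given $|A|=pqr$. So the signature is exactly as stated.

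\emph{Step 2: membership in $\E$.} At this point $A$ has $|A|=N$, $\Phi_N\mid A$, and $\Phi_{p^2},\Phi_{q^2},\Phi_{r^2}\mid A$. It remains to show $\Phi_{d^2}\mid A$ for the mixed $d$, namely $d\in\{pq,pr,qr,pqr\}$. This is the step where the L\aba--Londner theorem enters. For the tiling $A\oplus B=\ZZ_{(pqr)^2}$ with $|A|=|B|=pqr$, it asserts that (T2) holds for both factors. Applying (T2) to $A$ with $S_A=\{p^2,q^2,r^2\}$ gives $\Phi_{p^2q^2},\Phi_{p^2r^2},\Phi_{q^2r^2},\Phi_{p^2q^2r^2}\mid A$. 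Hence $A\in\E(p,q,r)$, and (T2) for $B$ is likewise supplied by the same theorem. Theorem~\ref{thm:main} then gives both $r\in\langle p,q\rangle$ and the confinement of $A$ modulo $r$.

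\emph{Step 3: converse.} Suppose $r\in\langle p,q\rangle$. Theorem~\ref{thm:main} gives some $A\in\E(p,q,r)$, and the explicit complement built in Section~\ref{sec:fourier} gives a tiling. Alternatively, we can take
\[
 B=\{i\,pqr\cdot(\text{suitable})\}
\]
realizing $S_B=\{p,q,r\}$, for instance the standard complement with mask $\prod_{s\in\{p,q,r\}}[s]_{X^{M/(s\cdot s)}}$-type. We then verify $A\oplus B=\ZZ_M$ by checking that every $\Phi_s$, $1<s\mid M$, divides $A$ or $B$, using the fact that $A\in\E$ supplies all orders involving a squared prime.

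\emph{Main obstacle.} The main obstacle is the allocation argument in Step~1, specifically ensuring the signature statement in the ordinary-mask sense. Everything else is a direct invocation of the cited results.
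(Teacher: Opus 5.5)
Your proposal is correct and follows the paper's own reduction. Prime-power allocation (Lemma~\ref{lem:allocation}) forces $|A|=|B|=pqr$, with the squared levels in $A$ and the first levels in $B$. The \L aba--Londner balanced square-period theorem then supplies (T2) for both factors, so $A\in\E(p,q,r)$ and Theorem~\ref{thm:main} gives both the criterion and the confinement. The converse uses the standard complement $B_\square$ of Theorem~\ref{thm:normalform}.

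Two small points. First, the paper gets the cardinalities directly from $\Phi_p\Phi_q\Phi_r\mid B$. That route also removes the mild circularity in your ``one point needs checking'' remark: you use $|A|=pqr$ to exclude levels $\alpha\ge3$, after already relying on that exclusion to get $|A|=pqr$. The full allocation lemma excludes such levels outright. Second, drop your ``alternatively'' complement, since its stated justification is false. Membership in $\E$ guarantees zeros only at orders $d^2$ with $d\mid N$, not at orders such as $p^2q$.
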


For example, no such tiling exists in $\ZZ_{11025}$, since $7\notin\langle3,5\rangle$. The confinement conclusion has an exact form. Define
\[
 \gcd(M,A-A)=\gcd\bigl(M,\{a-a':a,a'\in A\}\bigr),\qquad
 \Stab(A)=\{t\in\ZZ_M:A+t=A\}.
\]
A set is \emph{primitive} if the first quantity is $1$, and \emph{aperiodic} if the second set is $\{0\}$. We prove in Corollary~\ref{cor:primitivequotient} and Theorem~\ref{thm:aperiodicgeneral} that every $A\in\E(p,q,r)$ satisfies
\[
 \gcd(M,A-A)=r,\qquad |\Stab(A)|\in\{1,p,q,pq\}.
\]
Each of the four stabilizer orders occurs whenever $r\in\langle p,q\rangle$, with an aperiodic complement. After translation and division by $r$, every such $A$ becomes a primitive tile of $\ZZ_{p^2q^2r}$ with unsupported $\Phi_{pq}$.

Existence at a larger period holds for every three-prime set. The construction uses the Ap\'ery mask
\[
 [L]_X\Phi_{pq}(X),\qquad L=r^m\in\langle p,q\rangle.
\]
The precise $0$--$1$ criterion follows from the standard numerical-semigroup identity discussed by Moree \cite{Moree}. An explicit higher-scale factor completes (T1) and (T2) without restoring the absent low prime-power zeros. Taking $m\le2$ gives period $p^2q^2r^{m+1}$. Theorem~\ref{thm:complements} classifies every complement of a more general multiscale version and proves that each satisfies (T2).

The least cyclic period for any unsupported order is $180$; for an unsupported order containing three distinct primes it is $900$. Both bounds are attained with both factors aperiodic. The complete layer description at modulus $900$ yields $1\,834\,800$ distinct subsets with unsupported $\Phi_{30}$, including $874\,800$ aperiodic ones. The enumeration is given separately in Appendix~\ref{app:900}.

Several ingredients have established antecedents. The Ap\'ery identity is classical \cite{Moree}. The five-point core at $(2,3,5)$ realizes the CRT configuration in \cite[Example 8.1]{KLMS}; that core alone is not an unsupported example, since its cardinality is coprime to $6$. The higher-scale lift below changes this cardinality constraint while preserving the mixed zero and completing (T1), (T2). Likewise, the spectrum supplied by (T1), (T2) is due to \L aba \cite[Theorem 1.5(i)]{LabaSpectral}, and standard tiling complements are part of the Coven--Meyerowitz construction; see also \cite[Proposition 3.4]{LLmethods}. Our direct arguments specialize these constructions to the coordinates used below.

Vanishing sums of roots of unity provide another relevant comparison. Lam and Leung \cite{LamLeung} characterize the possible weights of such sums by the additive semigroup generated by the prime divisors of their order. At order and weight $pqr$, the weight criterion supplies no obstruction. The square-signature conditions give the support capacities needed for our rigidity theorem. The same semigroup threshold also has an elementary mass interpretation, recorded in Remark~\ref{rem:masscriterion}.

\subsection{Outline of the proof}
The seven square-order zeros make an $N\times N$ Fourier matrix unitary. Column orthogonality gives one point at each high-digit triple and an exact separation condition on the low digits. The equivalent pairwise difference criterion for integer-box tilings is given in \cite[Lemma 12]{BL}; here we derive it by Fourier orthogonality and express it in high and low digits.

If no low-digit function is constant, the separation condition forces three disjoint cylinder supports. Vanishing at order $pqr$ then gives positive integers $n,m,k$ satisfying $p^2n+q^2m+r^2k=pqr$, together with three support-capacity inequalities. A three-rectangle inequality rules these out for $p\ge5$, and short integer arguments treat $p=2,3$. Hence one low-digit function is constant. The remaining character sum is an additive integer matrix, whose row and column divisibilities yield $r\in\langle p,q\rangle$ and exclude confinement at either smaller prime. The Ap\'ery construction gives the converse. The complement classification and the enumeration are subsequent applications.
\section{Polynomial and tiling preliminaries}\label{sec:prelim}
Throughout, $A\oplus B=\ZZ_M$ means that every residue has exactly one representation as $a+b$. Equivalently,
\begin{equation}\label{eq:tiling}
 A(X)B(X)\equiv[M]_X\pmod{X^M-1}.
\end{equation}
All classifications below refer to this specified cyclic group and its divisors.

We recall the prime-power allocation argument of Coven and Meyerowitz \cite[Lemma 2.1]{CM}; see also \cite[Lemma 9.1]{KLMS}. Its short proof is included to specify the levels relative to the chosen period.

\begin{lemma}[Prime-power allocation in a cyclic tiling]\label{lem:allocation}
If $A\oplus B=\ZZ_M$, then for every prime $p\mid M$, the factors $\Phi_{p^j}$, $1\leq j\leq v_p(M)$, are partitioned between $A$ and $B$, and exactly $v_p(|A|)$ of them divide $A$. No additional $p$-power cyclotomic factor outside these levels divides either mask.
\end{lemma}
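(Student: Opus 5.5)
The plan is to work with the tiling identity \eqref{eq:tiling}, $A(X)B(X)\equiv[M]_X \pmod{X^M-1}$, evaluated at the relevant roots of unity. Every divisor $s>1$ of $M$ gives $\Phi_s\mid X^M-1$ and $\Phi_s\mid[M]_X$. So for each $s\mid M$ with $s>1$ we get $\Phi_s\mid A(X)B(X)$, and since $\Phi_s$ is irreducible over $\QQ$, $\Phi_s$ divides $A$ or $B$. Taking $s=p^j$ with $1\le j\le v_p(M)$, every $p$-power level is therefore covered by at least one of the two masks.

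For the counting, set $a=v_p(M)$. The key identity is
\[
 [p^a]_X=\prod_{j=1}^{a}\Phi_{p^j}(X),\qquad \Phi_{p^j}(1)=p .
\]
I would reduce the tiling modulo $p^a$. Put $A_p(X)=A(X)\bmod (X^{p^a}-1)$, and similarly $B_p$. Because $p^a\mid M$, the reduction of $[M]_X$ modulo $X^{p^a}-1$ is $(M/p^a)[p^a]_X$. Hence
\[
 A_p(X)B_p(X)\equiv \tfrac{M}{p^a}[p^a]_X \pmod{X^{p^a}-1}.
\]

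Next, a polynomial in $\ZZ_{\ge0}[X]$ of degree $<p^a$, or any polynomial, satisfies the following: the largest power of $p$ dividing $F(1)$ is at least $p^{\#\{j:\Phi_{p^j}\mid F\}}$. This holds because $\prod\Phi_{p^j}$ divides $F$ with an integer cofactor, by Gauss's lemma since the $\Phi$ are monic. So writing
\[
 e_A=\#\{j\le a:\Phi_{p^j}\mid A\},\qquad e_B=\#\{j\le a:\Phi_{p^j}\mid B\},
\]
we get $p^{e_A}\mid |A|$ and $p^{e_B}\mid |B|$. Since $|A||B|=M$, we have $v_p(|A|)+v_p(|B|)=a$. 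Combined with covering, this gives $e_A+e_B\ge a$. Hence $e_A+e_B=a$, with $e_A=v_p(|A|)$ and $e_B=v_p(|B|)$, and no level is shared. That is exactly the claimed partition. This $p$-adic counting is the main step, and I expect it to carry essentially the whole argument.

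For the last sentence, suppose $\Phi_{p^j}$ with $j>a$ divided $A$, say. Then $\prod$ over all $p$-power factors of $A$ would have $p$-part at least $p^{e_A+1}$ dividing $|A|$. This contradicts $e_A=v_p(|A|)$, because the count of all $p$-power divisors, not only those at levels $\le a$, bounds $v_p(|A|)$ from below. The same argument applies to $B$. The only subtlety to watch is that "mask" means the polynomial with representatives in $\br M$. Divisibility by $\Phi_{p^j}$ for $j\le a$ is independent of the choice of representatives, while for $j>a$ we use the fixed representatives, and the counting argument applies to them directly.
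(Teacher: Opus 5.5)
Your proof is correct and follows the paper's argument. Each level $\Phi_{p^j}$, $j\le v_p(M)$, divides some mask by \eqref{eq:tiling} and irreducibility, and $\Phi_{p^j}(1)=p$ with monic divisibility in $\ZZ[X]$ gives $p^{e_A}\mid|A|$ and $p^{e_B}\mid|B|$, which forces $e_A+e_B=v_p(M)$ with no shared or extra levels. The reduction modulo $X^{p^a}-1$ in your second paragraph is never used and can be deleted.
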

\begin{proof}
Every indicated factor divides at least one mask by \eqref{eq:tiling}. Distinct monic cyclotomic factors multiply in $\ZZ[X]$, and each of these has value $p$ at $1$. Hence their total allocation, counting any factor assigned twice, is at most $v_p(|A|)+v_p(|B|)=v_p(M)$. Since all $v_p(M)$ levels must be allocated, equality holds, with no repetition or extra level. A prime not dividing $M$ cannot occur as a prime-power factor of either mask because it would divide its cardinality.
\end{proof}

\begin{proof}[Reduction in Corollary~\ref{cor:tiling-main}]
Suppose $A\oplus B=\ZZ_{N^2}$ and $\Phi_N$ is unsupported for $A$. Then $N\mid|A|$. Each of $\Phi_p,\Phi_q,\Phi_r$ divides $B$, so $N\mid|B|$ as well. Cardinalities give $|A|=|B|=N$, and Lemma~\ref{lem:allocation} assigns precisely the squared levels to $A$ and the first levels to $B$. The odd and even balanced square-period theorems \cite{LLodd,LLeven}, in the combined form \cite[Theorem 1.3]{LL}, give (T2) for both. Hence $A\in\E(p,q,r)$, to which Theorem~\ref{thm:main} applies. Conversely, the square-signature normal form in Theorem~\ref{thm:normalform} supplies a cyclic tiling for every member of $\E(p,q,r)$.
\end{proof}

\subsection{Numerical semigroups and the Ap\'ery mask}
Let $p,q$ be distinct primes, $Q=pq$, and $S=\langle p,q\rangle$. The standard semigroup identity is
\begin{equation}\label{eq:semigroup}
 \frac{\Phi_Q(X)}{1-X}
 =\frac{1-X^{pq}}{(1-X^p)(1-X^q)}
 =\sum_{n\in S}X^n.
\end{equation}
For completeness, every element of $S$ has a unique expression $ap+bq$ with $a\geq0$ and $0\leq b<p$: reduce $b$ modulo $p$, transferring multiples of $pq$ into $ap$. Summing these expressions proves the identity. Its connection with binary cyclotomic polynomials, and the Ap\'ery formulation, are classical; see \cite{Moree}.

\begin{lemma}[Ap\'ery smoothing]\label{thm:apery}
For every integer $L\geq1$, the polynomial
\[
 C_L(X)=[L]_X\Phi_{pq}(X)
\]
has all coefficients in $\{0,1\}$ if and only if $L\in S$. When this holds,
\[
 \supp C_L=\Ap(S;L):=\{s\in S:s-L\notin S\}.
\]
This support has exactly $L$ elements and is a complete residue system modulo $L$. Its degree is $L-1+(p-1)(q-1)<QL$.
\end{lemma}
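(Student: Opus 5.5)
Multiplying the semigroup identity \eqref{eq:semigroup} by $1-X^L$ gives
\[
 C_L(X)=[L]_X\Phi_{pq}(X)=(1-X^L)\sum_{n\in S}X^n=\sum_{n\in S}X^n-\sum_{n\in S}X^{n+L}.
\]
So the coefficient of $X^k$ in $C_L$ equals $\ind_S(k)-\ind_S(k-L)$, where $\ind_S(j)=0$ for $j<0$. This single formula drives every assertion.

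Each coefficient lies in $\{-1,0,1\}$. A coefficient equals $-1$ exactly when $k-L\in S$ but $k\notin S$.

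If $L\in S$, then $k-L\in S$ forces $k=(k-L)+L\in S$, because $S$ is closed under addition. Hence no coefficient is $-1$, and the coefficient is $1$ exactly when $k\in S$ and $k-L\notin S$, that is, when $k\in\Ap(S;L)$.

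If $L\notin S$, take $k=L$: then $k-L=0\in S$ while $k\notin S$, so the coefficient of $X^L$ is $-1$. This proves the $0$--$1$ criterion and the description of the support.

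For the residue statement, fix a residue class $c$ modulo $L$. Since $S$ contains every integer $\geq (p-1)(q-1)$, the class contains elements of $S$. Its least such element $s_c$ lies in $\Ap(S;L)$, because $s_c-L$ is either negative or a smaller element of the class, hence not in $S$. Any other $s\in S$ in the class satisfies $s\geq s_c+L$, and then $s-L=(s-L-s_c)+s_c$ with $s-L-s_c$ a nonnegative multiple of $L$. As $L\in S$, this gives $s-L\in S$, so $s\notin\Ap(S;L)$. Hence $\Ap(S;L)$ has exactly one element per class, so exactly $L$ elements, forming a complete residue system modulo $L$.

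The degree can be read off directly: $\deg[L]_X+\deg\Phi_{pq}=L-1+(p-1)(q-1)$. For the bound, note $(p-1)(q-1)<pq$, so
\[
 L-1+(p-1)(q-1)<L+pq-1\le pqL=QL,
\]
where the last step uses $L\ge1$ and $pq\ge2$.

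There is no real obstacle here. The only delicate point is handling negative indices in $\ind_S(k-L)$, which I resolve by the convention $\ind_S(j)=0$ for $j<0$ (equivalently $0\in S$ but no negative integer lies in $S$).
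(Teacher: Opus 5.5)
Your proof is correct and follows the paper's own route: the coefficient formula $\ind_S(k)-\ind_S(k-L)$ from the semigroup identity, the witness $k=L$ when $L\notin S$, closure of $S$ under addition when $L\in S$, the least-element-per-residue-class description of $\Ap(S;L)$, and the degree count. Two steps the paper only asserts are ones you justify, correctly: the conductor bound $(p-1)(q-1)$ shows that every residue class modulo $L$ meets $S$, and $(pq-1)(L-1)\ge0$ gives the final inequality $L+pq-1\le QL$.
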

\begin{proof}
By \eqref{eq:semigroup}, its coefficient at $X^n$ is
\[
 \ind_S(n)-\ind_S(n-L),
\]
where membership at negative integers means zero. If $L\notin S$, the coefficient at $n=L$ is $-1$. If $L\in S$, closure of $S$ under addition prevents negative coefficients. The displayed support is then immediate. Each residue class modulo $L$ has a least member in $S$, and all its later members are obtained by adding $L$. Hence the least member is the unique member of the Ap\'ery set in that class. The degree assertion follows from $\deg\Phi_{pq}=(p-1)(q-1)$ and $(p-1)(q-1)<Q$.
\end{proof}

\begin{remark}[General coprime generators]
For general coprime integers $a,b>1$, the right polynomial is
\[
 P_{\langle a,b\rangle}(X)
 =\frac{(1-X)(1-X^{ab})}{(1-X^a)(1-X^b)},
\]
not necessarily $\Phi_{ab}$. The same smoothing criterion applies to $[L]P_{\langle a,b\rangle}$. The assertion with $\Phi_{ab}$ alone is false for arbitrary coprime integers: $2\in\langle2,9\rangle$, but
\[
 [2]_X\Phi_{18}(X)=1+X-X^3-X^4+X^6+X^7.
\]
\end{remark}

\section{Fourier normalization at square period}\label{sec:fourier}
Let $N=pqr$ and $M=N^2$, with distinct primes $p<q<r$. We first impose the seven square-order conditions in \eqref{eq:E}, without imposing the low-order condition $\Phi_N\mid A$. The resulting normal form is necessary and sufficient. The spectrum used in the matrix argument is the square-signature specialization of \cite[Theorem 1.5(i)]{LabaSpectral}. The conclusion about the standard complement is also a specialization of the standard-complement criterion \cite[Proposition 3.4]{LLmethods}, and the pairwise difference condition is the specialization of \cite[Lemma 12]{BL} to exponents two. We give both directions explicitly to retain the precise low-digit condition (K).

Use the standard CRT identification
\[
 \ZZ_M\simeq\ZZ_{p^2}\times\ZZ_{q^2}\times\ZZ_{r^2}.
\]
In each coordinate, separate the high digit from the low digit, and put $I=\br p\times\br q\times\br r$.

\begin{theorem}[Square-signature normal form]\label{thm:normalform}
A set of cardinality $N$ has all the square-signature zeros in \eqref{eq:E} if and only if it can be written uniquely as
\begin{equation}\label{eq:normalform}
 A=\{(pu+f(v,w),\ qv+g(u,w),\ rw+h(u,v)):(u,v,w)\in I\},
\end{equation}
where the low-digit functions take values in $\br p,\br q,\br r$, respectively, and satisfy the following condition:
\begin{quote}
\textbf{(K)} For every two distinct high-digit triples, at least one coordinate has different high digits and equal low digits.
\end{quote}
Every such set tiles with the standard complement
\begin{equation}\label{eq:standardB}
 B_\square=\br p\times\br q\times\br r.
\end{equation}
In addition, $A\in\E(p,q,r)$ if and only if
\begin{equation}\label{eq:lowzero}
 \sum_{u,v,w}\zeta_p^{f(v,w)}\zeta_q^{g(u,w)}\zeta_r^{h(u,v)}=0,
 \qquad \zeta_s=e^{2\pi\ii/s}.
\end{equation}
\end{theorem}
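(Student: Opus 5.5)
We work in the CRT coordinates \(\ZZ_{p^2}\times\ZZ_{q^2}\times\ZZ_{r^2}\). Each coordinate of \(a\in A\) is written as \(x=px_1+x_0\) with high digit \(x_1\in\br p\) and low digit \(x_0\in\br p\), and similarly for \(q\) and \(r\). The seven conditions \(\Phi_{d^2}\mid A\) with \(1<d\mid N\) say that \(\widehat{\ind_A}\) vanishes at every character of order \(d^2\). In coordinates, these are the characters \(\chi_{(j_1,j_2,j_3)}(x,y,z)=\zeta_{p^2}^{j_1x}\zeta_{q^2}^{j_2y}\zeta_{r^2}^{j_3z}\) in which every nonzero index \(j_i\) is a unit times the prime, i.e.\ \(j_1\in p\,\br p\), and at least one \(j_i\ne0\). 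The plan is to use the set \(\Lambda=p\br p\times q\br q\times r\br r\) of \(N\) frequencies. It corresponds to the dual of the subgroup \(B_\square\), and \(\Lambda-\Lambda=\Lambda\).

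\emph{Step 1 (matrix).} The vanishing conditions, together with \(|A|=N\), say exactly that the \(N\times N\) matrix \(U=(\chi_\lambda(a))_{a\in A,\lambda\in\Lambda}\) satisfies \(U^*U=N I\): the off-diagonal entries are \(\widehat{\ind_A}(\lambda-\lambda')\) with \(\lambda-\lambda'\in\Lambda\setminus\{0\}\). Since \(U\) is square, this is equivalent to \(UU^*=NI\), i.e.\ for \(a\ne a'\) in \(A\), \(\sum_{\lambda\in\Lambda}\chi_\lambda(a-a')=0\). The sum factorizes over the three coordinates. The \(p\)-factor is \(\sum_{j}\zeta_p^{j(x-x')}\), which vanishes iff \(x\not\equiv x'\pmod p\), i.e.\ iff the low digits differ. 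Hence the condition is that for each pair \(a\ne a'\) there is some coordinate where the low digits differ.

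This naively contradicts (K), so the roles must be read carefully. The characters \(\zeta_{p^2}^{pj x}=\zeta_p^{jx}\) see only \(x\bmod p\), which is the \emph{low} digit in my convention. The paper's normal form \(pu+f\) has \(u\) as the high digit and \(f\) as the low digit. The statement of (K) ("different high digits and equal low digits") is then the condition coming from the \emph{other} orthogonality direction, i.e.\ the tiling criterion with \(B_\square\). So I will organize the proof as follows.

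\emph{Step 2 (tiling \(\Leftrightarrow\) zeros).} I show that the seven square zeros are equivalent to \(A\oplus B_\square=\ZZ_M\). Since \(B_\square\) has zeros exactly at \(\Phi_p,\Phi_q,\Phi_r\) and mixed first-level products, and both cardinalities equal \(N\), the tiling identity \eqref{eq:tiling} is equivalent to \(\widehat{\ind_A}\widehat{\ind_{B_\square}}=0\) off zero. The nonzero set of \(\widehat{\ind_{B_\square}}\) consists of the frequencies \(\xi\) with each component either \(0\) or not divisible by the prime. Checking this factor by factor is the same as checking the square-order zeros; this is a direct computation on \(\widehat{\ind_{B_\square}}\). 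So the zeros hold iff every residue is uniquely \(a+b\) with \(b\in B_\square\).

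\emph{Step 3 (normal form and (K)).} The tiling \(A\oplus B_\square\) means \(A\) hits every coset of the subgroup \(H=p\ZZ_{p^2}\times q\ZZ_{q^2}\times r\ZZ_{r^2}\) in the right pattern. Counting shows \(|A\cap(\text{fiber})|=1\) for each value of the triple of \emph{high} digits \((u,v,w)\). This follows from Step~1's column orthogonality applied with the low-digit characters, which force uniform distribution of \((u,v,w)\): summing \(\chi\) over the \(p\)-part gives the count condition. This defines the functions \(f,g,h\) uniquely. I then write \(f\) as depending only on \((v,w)\); this needs the argument that the \(p\)-low digit cannot depend on \(u\). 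It would follow from the vanishing at characters of order \(p^2\) alone, i.e.\ \(\Phi_{p^2}\mid A\) with the other coordinates fixed. This is the step I expect to be the main obstacle, because one must pass from the global vanishing statements to fiberwise statements. I would try Fourier inversion along the \(p\)-axis, restricted to frequencies that are trivial on the other two axes.

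Unique representation of \(a-a'\notin B_\square-B_\square\) then translates, digit by digit, into (K). If two triples violated (K), then in each coordinate either the high digits agree or the low digits differ, and one constructs \(b,b'\in B_\square\) with \(a+b=a'+b'\). Conversely, (K) prevents such a coincidence.

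\emph{Step 4 (low zero).} Finally, \(\Phi_N\mid A\) means \(\sum_a\zeta_p^{x}\zeta_q^{y}\zeta_r^{z}=0\), and reducing each coordinate modulo its prime leaves exactly \(f,g,h\). This gives \eqref{eq:lowzero}.
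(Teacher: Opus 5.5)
There is a genuine gap, and it starts in Step~1 with the choice of frequencies.

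\textbf{The frequency set in Step~1 is wrong.} A character $x\mapsto\zeta_{p^2}^{jx}$ of $\ZZ_{p^2}$ has order $p^2$ exactly when $p\nmid j$. For $j=pj'$ it equals $\zeta_p^{j'x}$ and has order $p$. So the off-diagonal entries of your $U^*U$, indexed by $\Lambda=p\br{p}\times q\br{q}\times r\br{r}$, are values of $A$ at characters of order dividing $N$. Those are first-level zeros, which belong to $B_\square$, not to $A$, so the hypotheses do not give $U^*U=NI$.

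The conclusion you draw from it is also false. The set $A=N\ZZ_M$ has all seven square-order zeros (its mask is $[N]_{X^N}$), yet all its points have low digits $(0,0,0)$. The clash with (K) that you noticed comes from this error. It is not a question of which orthogonality direction produces (K).

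The paper instead uses the frequency indices $\br{p}\times\br{q}\times\br{r}$. Nonzero differences of these indices are coprime to the respective primes, so row orthogonality is \emph{exactly} the seven square-order zeros. Column orthogonality then says that for $a\ne a'$ some factor $\sum_{j<s}\zeta_{s^2}^{j\Delta_s}$ vanishes. That is, some coordinate difference $\Delta_s$ is divisible by $s$ but not by $s^2$, which in digits is precisely (K).

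Everything else then follows immediately from (K):
\begin{itemize}
\item Two points with the same high triple cannot satisfy (K), so each triple occurs exactly once.
\item Two triples differing only in $u$ must be separated in the first coordinate, which forces equal first low digits. This is the $u$-independence of $f$ that you flagged as the main obstacle; it needs no fiberwise Fourier inversion.
\item A difference with a coordinate of exact order $s$ never lies in $B_\square-B_\square$, which gives the tiling.
\end{itemize}

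\textbf{What is correct.} Step~2 is sound: the nonvanishing set of $\widehat{\ind_{B_\square}}$ does identify the seven square zeros with $A\oplus B_\square=\ZZ_M$. Your direction (K) $\Rightarrow$ tiling $\Rightarrow$ zeros is fine, and so is Step~4.

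\textbf{Why the forward direction fails.} Step~3 takes one point per high triple from the false Step~1. That part is repairable, since two such points differ by an element of $B_\square-B_\square$. The real problem is that (K) does not follow from the tiling ``digit by digit''. Take a pair whose high digits agree in the first two coordinates, and whose third coordinates have high digits differing by $2$ and distinct low digits. This pair violates (K). Its third-coordinate difference is $2r+\epsilon$ with $0<|\epsilon|<r$, which is not within $r-1$ of $0$ modulo $r^2$ because $r\ge5$. So its difference is outside $B_\square-B_\square$, and no coincidence $a+b=a'+b'$ can be built from that pair.

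Pairwise, the condition $(A-A)\cap(B_\square-B_\square)=\{0\}$ is strictly weaker than (K). Tiling with $B_\square$ does imply (K), but only globally. That needs the spectral duality supplied by the square unitary matrix with the correct frequencies, or a Keller-type argument for box tilings. This duality is the missing idea.
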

\begin{proof}
Form the $N\times N$ matrix with columns indexed by $a\in A$ and rows indexed by $\omega\in\br p\times\br q\times\br r$:
\[
 U_{\omega,a}=N^{-1/2}\exp\left(2\pi\ii\left(
 \frac{\omega_1a_1}{p^2}+\frac{\omega_2a_2}{q^2}+\frac{\omega_3a_3}{r^2}\right)\right).
\]
For two distinct rows, each nonzero frequency difference in a coordinate is coprime to that coordinate's prime. Their inner product therefore evaluates $A$ at a character whose order is one of the required products of squared primes. Hence the rows are orthonormal and, since $U$ is square, so are the columns.

For two columns, their inner product factors into three geometric sums. For a prime $s$,
\[
 \sum_{j=0}^{s-1}e^{2\pi\ii j\Delta/s^2}=0
 \quad\Longleftrightarrow\quad s\mid\Delta\text{ and }s^2\nmid\Delta.
\]
Thus any two distinct points have, in some coordinate, a difference divisible by its prime but not by its square. At most one point can have any given triple of high digits, since two such points would have all coordinate differences of absolute value less than the respective primes. As there are $N$ points, every high-digit triple occurs once. In these coordinates the difference condition is exactly (K). Comparing two triples differing only in $u$ forces their first low digits to agree. This makes the first low digit independent of $u$, and similarly for the other coordinates. This gives \eqref{eq:normalform}.

Conversely, (K) makes the columns orthonormal, hence the rows. Taking frequency differences equal to $1$ in any chosen nonempty coordinate subset gives a zero of the corresponding square order. Cyclotomic irreducibility gives the full divisibility statement.

To prove tiling with $B_\square$, a nonzero difference of two points of $A$ has some coordinate divisible exactly once by its prime. A difference of two points of $B_\square$ cannot have that property. Hence the two difference sets intersect only in zero, and their cardinality product is $M$. Finally, the character in \eqref{eq:lowzero} has order $N$, giving the last equivalence.
\end{proof}

\begin{lemma}[Two-dimensional consequence]\label{lem:2D}
For a set parametrized by $(pu+f(v),qv+g(u))$ on a complete $p\times q$ high-digit grid, condition (K) holds if and only if $f$ or $g$ is constant.
\end{lemma}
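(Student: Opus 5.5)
We need to prove the two-dimensional analogue of Theorem~\ref{thm:normalform}. The set is $\{(pu+f(v),\,qv+g(u)) : u\in\br p,\ v\in\br q\}$, and condition (K) reads as follows. For distinct pairs $(u,v)\ne(u',v')$, at least one of these holds:

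\[
u\ne u' \text{ and } f(v)=f(v'),
\qquad\text{or}\qquad
v\ne v' \text{ and } g(u)=g(u').
\]

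\emph{Sufficiency.} Suppose $f$ is constant, and take two distinct pairs. If $u\ne u'$, the first alternative holds, because $f(v)=f(v')$ automatically. If $u=u'$, then $v\ne v'$ and $g(u)=g(u')$ trivially, so the second alternative holds. The case where $g$ is constant is symmetric.

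\emph{Necessity.} I argue by contradiction: assume neither $f$ nor $g$ is constant.

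1. Since $f$ is not constant, choose $v\ne v'$ with $f(v)\ne f(v')$.
2. Since $g$ is not constant, choose $u\ne u'$ with $g(u)\ne g(u')$.
3. Apply (K) to the pairs $(u,v)$ and $(u',v')$.
   - The first alternative needs $f(v)=f(v')$, which fails by step 1.
   - The second alternative needs $g(u)=g(u')$, which fails by step 2.

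So (K) is violated, and one of $f,g$ must be constant.

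The only point needing care is that the two witnessing pairs can be combined freely. This works because the high-digit grid is a complete product $\br p\times\br q$: the $u$-witnesses and $v$-witnesses range over independent coordinates, so $(u,v)$ and $(u',v')$ are both genuine points of the grid. There is no real obstacle beyond recording this. The degenerate cases where $p$ or $q$ equals $1$ do not arise, since both are primes.
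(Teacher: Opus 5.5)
Your proof is correct and is the same as the paper's. For necessity you combine independent witnesses $v\ne v'$ and $u\ne u'$ into the pair $(u,v),(u',v')$, which violates (K). For sufficiency you separate a pair in the coordinate of the constant function when those high digits differ, and in the other coordinate otherwise. Your remark about primality is unnecessary, since the argument works for any grid sizes; the paper relies on this when it later applies the lemma to $q\times r$ and $p\times r$ slices.
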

\begin{proof}
If both are nonconstant, choose $v\ne v'$ with $f(v)\ne f(v')$ and $u\ne u'$ with $g(u)\ne g(u')$; the resulting pair violates (K). If one is constant, a pair differing in its corresponding high coordinate is separated there; a pair differing only in the other high coordinate is separated in that other coordinate.
\end{proof}

\section{The three-cylinder alternative}\label{sec:cylinders}
The normal form separates the high digits from the low digits, but condition (K) still couples the three low functions. The next lemma describes that coupling when no low function is constant. Its proof is combinatorial and remains valid for arbitrary coordinate sizes at least two.

\begin{lemma}[Three-cylinder alternative]\label{lem:cylinders}
Suppose that the functions in \eqref{eq:normalform} satisfy (K), and that none of $f,g,h$ is constant. After subtracting constants from the low values in their respective residue groups, there are nonempty proper subsets
\[
 U_0\subset\br p,\qquad V_0\subset\br q
\]
and nonempty disjoint subsets $W_f,W_g\subset\br r$ such that
\begin{equation}\label{eq:cylinder-supports}
 \begin{split}
 \supp f&\subset V_0\times W_f,\\
 \supp g&\subset U_0\times W_g,\\
 \supp h&\subset U_1\times V_1,
 \qquad U_1=\br p\setminus U_0,\quad V_1=\br q\setminus V_0.
 \end{split}
\end{equation}
In particular, at every high-digit triple, at most one of the three low values is nonzero. Conversely, any functions with the support restrictions \eqref{eq:cylinder-supports} satisfy (K).
\end{lemma}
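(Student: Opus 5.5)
The converse direction is a direct case check. I would do it first, because it shows exactly which conditions the forward direction has to produce. Take two distinct high-digit triples $(u,v,w)\ne(u',v',w')$.

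\emph{One coordinate differs.} Condition (K) holds automatically, since the low digit in that coordinate does not depend on its own high digit.

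\emph{All three coordinates differ.} Under \eqref{eq:cylinder-supports} each point has at most one nonzero low value: $V_0\times W_f$, $U_0\times W_g$ and $U_1\times V_1$ cannot overlap at a triple, because $W_f\cap W_g=\varnothing$ and $U_0\cap U_1=V_0\cap V_1=\varnothing$. So at most two coordinates are spoiled, and a third coordinate has both low digits zero and different high digits.

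\emph{Exactly two coordinates differ.}
\begin{itemize}
\item If $w=w'$ and $w\in W_f$, then $g(\cdot,w)\equiv0$ by disjointness, so the second coordinate separates the pair. Otherwise $f(\cdot,w)\equiv0$ and the first coordinate separates.
\item If $v=v'$ and $v\in V_0$, then $h(\cdot,v)\equiv0$. Otherwise $f(v,\cdot)\equiv0$.
\item If $u=u'$, the same argument works with $U_0$ in place of $V_0$.
\end{itemize}

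For the forward direction, I would start from the same three two-coordinate cases, each read through Lemma~\ref{lem:2D}. Fixing $w$, the pairs differing in $u$ and $v$ form a full $p\times q$ grid with low functions $f(\cdot,w)$ and $g(\cdot,w)$. Hence for every $w$ at least one of $f(\cdot,w)$, $g(\cdot,w)$ is constant. Similarly, for every $v$ one of $f(v,\cdot)$, $h(\cdot,v)$ is constant, and for every $u$ one of $g(u,\cdot)$, $h(u,\cdot)$ is constant. These slice dichotomies define the candidate sets:
\begin{itemize}
\item $W_f=\{w: f(\cdot,w)\text{ nonconstant}\}$ and $W_g=\{w: g(\cdot,w)\text{ nonconstant}\}$, which are disjoint by the first dichotomy;
\item $V_0$, the set of $v$ with $h(\cdot,v)$ constant, and $U_0$, the set of $u$ with $h(u,\cdot)$ constant.
\end{itemize}
The real work is to turn slicewise constancy into constancy with a single global value, so that one constant can be subtracted from each function.

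For that globalization I would use the pairs differing in all three coordinates, for which (K) requires
\[
f(v,w)=f(v',w')\quad\text{or}\quad g(u,w)=g(u',w')\quad\text{or}\quad h(u,v)=h(u',v').
\]
The plan is to play this against the nonconstancy hypotheses. Since $h$ is not constant, choose $u\ne u'$ or $v\ne v'$ witnessing $h(u,v)\ne h(u',v')$. Then for all $w\ne w'$, every such pair forces $f(v,w)=f(v',w')$ or $g(u,w)=g(u',w')$. Running over the possible witnesses, and combining with the slice dichotomies, should show two things:
\begin{itemize}
\item the constant values $f(\cdot,w)$ for $w\notin W_f$ all coincide, with common value $c_f$, and $f(v,\cdot)$ equals $c_f$ whenever $v\notin V_0$;
\item the analogous statements hold for $g$ (common value $c_g$, off $W_g$ and off $U_0$) and for $h$ (common value $c_h$, on the $U_0$ and $V_0$ slices).
\end{itemize}
Subtracting $c_f,c_g,c_h$ then places the supports inside the three cylinders in \eqref{eq:cylinder-supports}.

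The remaining assertions follow from the nonconstancy hypotheses. Nonconstancy of $f$ makes $W_f$ and $V_0$ nonempty, and similarly for $W_g$ and $U_0$. Nonconstancy of $h$ makes $U_1$ and $V_1$ nonempty, so $U_0$ and $V_0$ are proper. I expect the globalization step to be the main obstacle. It needs a careful argument, possibly a short case split according to whether a given slice is exceptional for one function or for the other. It must also exclude configurations in which the constant values drift between different constant slices. There I would first try to find an explicit violating pair with all three high digits different, built from two witnesses of nonconstancy.
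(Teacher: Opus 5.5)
Your converse is correct; your split by the number of differing high digits is a tidy alternative to the paper's $F/G/H/O$ typing. The three slice dichotomies from Lemma~\ref{lem:2D} are also the right starting point. The forward direction, however, has a genuine gap. The globalization step is essentially the whole content of the lemma, and it is only announced, not carried out. The tool you propose for it also does not do the job.

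\emph{Why your tool fails.} Pairs with all three high digits different cannot detect drift between layers. Suppose $f(\cdot,w_1)\equiv c_1$ and $f(\cdot,w_2)\equiv c_2$ with $c_1\ne c_2$, and $g(\cdot,w_1)$, $g(\cdot,w_2)$ are the same constant. Then every such pair between the two layers is separated in the second coordinate, whatever $h$ does. Drift is instead killed by the $v$-slice dichotomy: for a column $v$ with $h(\cdot,v)$ nonconstant, $f(v,\cdot)$ is constant, so $c_1=f(v,w_1)=f(v,w_2)=c_2$. This needs two facts you have not established: such columns exist ($V_0$ proper), and $f(v,\cdot)$ takes the same value for all of them.

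\emph{Circularity.} Your closing deduction of nonemptiness is circular. If $V_0=\varnothing$, the claim $f(v,\cdot)=c_f$ for all $v\notin V_0$ says that $f$ is constant. So this case must be refuted before the globalization, not derived after it.

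\emph{The missing argument.}
\begin{enumerate}
\item $h$ has a constant row. Otherwise every $g(u,\cdot)$ is constant, $g=g_u$. As $g$ is nonconstant, every layer's $g$-vector is nonconstant, so $f=f_w$. A nonconstant column of $h$ would then make $f$ constant, so all columns are constant, $h=h_v$. Now choose $u\ne u'$, $v\ne v'$, $w\ne w'$ witnessing nonconstancy of $g_u,h_v,f_w$. The pair $(u,v,w),(u',v',w')$ has all three low digits different, violating (K). Symmetrically, $h$ has a constant column. This is the only place a pair with all three high digits different is needed, and it uses three witnesses, one per function.
\item A constant row and a constant column meet, so all of them carry one common value $\gamma$. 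If every row were constant, a constant column would make $h$ constant, and symmetrically for columns. Hence $U_0,V_0$ are nonempty and proper.
\item For $v\in V_1$ write $f(v,\cdot)=f_v$. If two of these differed, every layer's $f$-vector would be nonconstant, so every layer's $g$-vector would be constant. That constant is pinned by $g(u,\cdot)=g_u$ for any $u\in U_1$, making $g$ globally constant. Hence $f_v=\alpha$ on $V_1$, and likewise $g_u=\beta$ on $U_1$.
\item A constant $f$-layer must equal $\alpha$ (compare at any $v\in V_1$), and a constant $g$-layer must equal $\beta$. Subtracting $\alpha,\beta,\gamma$ gives the support restrictions. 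Your sets $W_f,W_g$ of nonconstant layers then coincide with $\{w:\text{some }f(v,w)\ne\alpha\}$ and $\{w:\text{some }g(u,w)\ne\beta\}$, and are disjoint and nonempty.
\end{enumerate}
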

\begin{proof}
View $h$ as a $p\times q$ matrix. A nonconstant row $h(u,\cdot)$ forces $g(u,w)$ to be independent of $w$, by Lemma~\ref{lem:2D} applied with $u$ fixed. A nonconstant column $h(\cdot,v)$ similarly forces $f(v,w)$ to be independent of $w$.

We first show that $h$ has both a constant row and a constant column. If it has no constant row, then $g(u,w)=g_u$. Since $g$ is not globally constant, Lemma~\ref{lem:2D} on every fixed-$w$ layer gives $f(v,w)=f_w$. A nonconstant column of $h$ would now make $f_w$ constant, contrary to the hypothesis. Therefore all columns of $h$ are constant, so $h(u,v)=h_v$. Each of $f_w,g_u,h_v$ is nonconstant. Choose $u\ne u'$, $v\ne v'$, and $w\ne w'$ on which the corresponding functions have different values. The two triples $(u,v,w)$ and $(u',v',w')$ then have different low values in all three coordinates, contradicting (K). Interchanging the first two coordinates proves that $h$ cannot have no constant column either.

Let $U_0$ be the set of constant rows and $V_0$ the set of constant columns. They are nonempty and proper: if, for example, every row were constant, one constant column would make the whole matrix constant. Their common value is some $\gamma$, because a constant row and a constant column meet. Thus $h=\gamma$ whenever $u\in U_0$ or $v\in V_0$.

For $u\in U_1$, write $g(u,w)=g_u$, and for $v\in V_1$, write $f(v,w)=f_v$. If the values $f_v$ on $V_1$ were not all equal, then every fixed-$w$ layer would have a nonconstant $f$-vector, forcing its $g$-vector to be constant. Its value is fixed at any $u\in U_1$, so $g$ would be globally constant. Hence $f_v=\alpha$ on $V_1$. The same argument gives $g_u=\beta$ on $U_1$.

On each fixed-$w$ layer, at least one of the two low vectors is constant. If the $f$-vector is constant, its value is necessarily $\alpha$; if the $g$-vector is constant, its value is $\beta$. Consequently
\[
 W_f=\{w:\text{some }f(v,w)\ne\alpha\},\qquad
 W_g=\{w:\text{some }g(u,w)\ne\beta\}
\]
are nonempty and disjoint. Subtracting $\alpha,\beta,\gamma$ from the low values gives \eqref{eq:cylinder-supports}. This normalization preserves equality of low values and multiplies \eqref{eq:lowzero} by a nonzero constant; it is used only for these two purposes.

For the converse, label a point by $F$, $G$, or $H$ if its nonzero low coordinate is the first, second, or third, and by $O$ if all are zero. Two points of types $F,G$ have distinct high $w$-digits and equal third low digits; types $F,H$ are separated in the second coordinate; types $G,H$ are separated in the first. For two points of type $F$, different $(v,w)$ are separated in the second or third coordinate; if $(v,w)$ agrees, their first low values agree and their high $u$-digits differ. The other equal-type cases are identical. A point of type $F$ and a point of type $O$ cannot have the same $(v,w)$, and are again separated in the second or third coordinate. The remaining cases follow by permutation, or are immediate for two type-$O$ points. Thus (K) holds.
\end{proof}

The support description reduces the vanishing condition to an integer mass identity and three capacity bounds.

\begin{lemma}[Axis multiplicities]\label{lem:axis-multiplicities}
Under the hypotheses of Lemma~\ref{lem:cylinders}, use the normalized functions and the subsets in \eqref{eq:cylinder-supports}. Suppose moreover that \eqref{eq:lowzero} holds. Set
\[
 a=|U_0|,\quad b=|V_0|,\quad c=|W_f|,\quad d=|W_g|.
\]
There are positive integers $n,m,k$ such that
\begin{equation}\label{eq:mass}
 p^2n+q^2m+r^2k=pqr,
\end{equation}
and
\begin{equation}\label{eq:capacities}
 \begin{gathered}
 1\le a\le p-1,\quad 1\le b\le q-1,\quad c,d\ge1,\quad c+d\le r,\\
 (p-1)n\le bc,\qquad (q-1)m\le ad,\qquad
 (r-1)k\le(p-a)(q-b).
 \end{gathered}
\end{equation}
\end{lemma}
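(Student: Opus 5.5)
We work with the normalized functions of Lemma~\ref{lem:cylinders}. The plan is to split the character sum \eqref{eq:lowzero} according to which low coordinate is nonzero. Then we use linear disjointness of the fields $\QQ(\zeta_p)$, $\QQ(\zeta_q)$, $\QQ(\zeta_r)$ to force each partial sum to be rational, and read off the integers $n,m,k$.

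\textbf{Splitting the sum.} The normalization multiplies \eqref{eq:lowzero} by a nonzero constant, as noted at the end of the proof of Lemma~\ref{lem:cylinders}, so the normalized sum still vanishes. By \eqref{eq:cylinder-supports}, at each triple at most one of $f,g,h$ is nonzero, so each summand equals
\[
1+(\zeta_p^{f}-1)+(\zeta_q^{g}-1)+(\zeta_r^{h}-1).
\]
Now $f(v,w)$ does not depend on $u$, $g(u,w)$ not on $v$, and $h(u,v)$ not on $w$. The vanishing condition therefore becomes
\[
N+p\,X_f+q\,X_g+r\,X_h=0,
\]
where
\[
X_f=\sum_{(v,w)\in V_0\times W_f}\bigl(\zeta_p^{f(v,w)}-1\bigr)\in\ZZ[\zeta_p],
\]
and $X_g\in\ZZ[\zeta_q]$ and $X_h\in\ZZ[\zeta_r]$ are defined in the same way over $U_0\times W_g$ and $U_1\times V_1$.

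\textbf{Forcing rationality.} The fields $\QQ(\zeta_p),\QQ(\zeta_q),\QQ(\zeta_r)$ are linearly disjoint over $\QQ$, since the degree of $\QQ(\zeta_N)$ is the product of the three degrees. Hence the $\QQ$-space spanned by $1$ together with
\[
\zeta_p^i\ (1\le i\le p-2),\qquad \zeta_q^j\ (1\le j\le q-2),\qquad \zeta_r^l\ (1\le l\le r-2)
\]
has these elements as a basis. Writing each $X_\bullet$ in this basis and comparing coefficients, each of $X_f,X_g,X_h$ must be rational. Let $c_j$ be the number of pairs $(v,w)\in V_0\times W_f$ with $f(v,w)=j$. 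Since $\sum_j c_j\zeta_p^j$ is rational, $c_1=\cdots=c_{p-1}=:n$, using $\sum_{j=0}^{p-1}\zeta_p^j=0$. The total number of pairs is $bc=c_0+(p-1)n$, which gives
\[
X_f=c_0-n-bc=-pn.
\]
Likewise $X_g=-qm$ and $X_h=-rk$. Substituting into the split sum yields \eqref{eq:mass}.

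\textbf{Positivity and capacities.} Since $f$ is nonconstant, it has a nonzero value somewhere, so some $c_j$ with $j\ne0$ is positive, and hence $n\ge1$; similarly $m,k\ge1$. The support of $f$ has $(p-1)n$ points inside $V_0\times W_f$, which gives $(p-1)n\le bc$. In the same way the supports of $g$ and $h$ lie in $U_0\times W_g$ and $U_1\times V_1$, giving $(q-1)m\le ad$ and $(r-1)k\le(p-a)(q-b)$. The bounds on $a,b$ hold because $U_0$ and $V_0$ are nonempty and proper. The condition $c+d\le r$ follows from the disjointness of $W_f$ and $W_g$.

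The main point needing care is the linear-disjointness step, where one must use the correct basis so that rationality of each piece genuinely follows. Everything after that is counting.
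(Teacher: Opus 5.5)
Your proposal is correct and follows essentially the same route as the paper: you split the normalized sum by which low coordinate is nonzero, use the product basis of $\QQ(\zeta_{pqr})$ to force $n_1=\cdots=n_{p-1}$ and its analogues (your single-coordinate elements are exactly the part of that basis the paper's argument uses), and read \eqref{eq:mass} from the constant term and \eqref{eq:capacities} from the three support rectangles. The only item you leave implicit is $c,d\ge1$, which is just the nonemptiness of $W_f,W_g$ from Lemma~\ref{lem:cylinders}.
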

\begin{proof}
For $1\le i<p$, let $n_i$ count the pairs $(v,w)$ with $f(v,w)=i$; define $m_j$ and $k_\ell$ similarly. A nonzero first low value occurs at $p n_i$ points, because the high digit $u$ is free. The corresponding counts on the other axes are $q m_j$ and $r k_\ell$. All remaining points have zero low values. Thus the low-character sum is
\begin{equation}\label{eq:axis-zero}
 C+p\sum_{i=1}^{p-1}n_i\zeta_p^i
  +q\sum_{j=1}^{q-1}m_j\zeta_q^j
  +r\sum_{\ell=1}^{r-1}k_\ell\zeta_r^\ell,
\end{equation}
where
\[
 C=pqr-p\sum_i n_i-q\sum_j m_j-r\sum_\ell k_\ell.
\]
The products $\zeta_p^i\zeta_q^j\zeta_r^\ell$, with $0\le i<p-1$, $0\le j<q-1$, and $0\le\ell<r-1$, form a basis of $\QQ(\zeta_{pqr})$. Indeed, the three roots generate this field, their individual minimal polynomials reduce every exponent to the indicated range, and the number of spanning monomials is $(p-1)(q-1)(r-1)=[\QQ(\zeta_{pqr}):\QQ]$. Eliminate the last power in each sum in \eqref{eq:axis-zero}. The coefficient of each nonconstant single-coordinate basis vector shows that
\[
 n_1=\cdots=n_{p-1}=n,\quad
 m_1=\cdots=m_{q-1}=m,\quad
 k_1=\cdots=k_{r-1}=k.
\]
For a prime equal to $2$, the corresponding assertion has just one entry and is automatic. Each common value is positive, since each of the normalized low functions is nonzero somewhere. The constant coefficient of \eqref{eq:axis-zero} is now $pqr-p^2n-q^2m-r^2k$. This proves \eqref{eq:mass}. Counting available positions in the three rectangles of \eqref{eq:cylinder-supports} gives \eqref{eq:capacities}.
\end{proof}

\section{Excluding the unconfined alternative}\label{sec:exclusion}
We now show that the mass identity and the capacity bounds are incompatible.

\begin{lemma}[Three-rectangle inequality]\label{lem:rectangles}
Let $x,y,z\in[0,1]$, and suppose
\[
 0\le\alpha\le yz,\qquad
 0\le\beta\le x(1-z),\qquad
 0\le\gamma\le(1-x)(1-y).
\]
Then
\begin{equation}\label{eq:rectangle-ineq}
 1-\alpha-\beta-\gamma\ge2\sqrt{\alpha\beta\gamma}.
\end{equation}
\end{lemma}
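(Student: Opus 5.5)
The plan is to reduce to the extremal case where all three constraints are equalities. Once that is done, the inequality should follow from one algebraic identity and AM--GM.

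\emph{Step 1 (monotone reduction).} View the two sides of \eqref{eq:rectangle-ineq} as functions of $(\alpha,\beta,\gamma)\in[0,\infty)^3$. The left side $1-\alpha-\beta-\gamma$ is nonincreasing in each variable. The right side $2\sqrt{\alpha\beta\gamma}$ is nondecreasing in each variable. So raising $\alpha,\beta,\gamma$ to their upper bounds
\[
 \alpha^*=yz,\qquad \beta^*=x(1-z),\qquad \gamma^*=(1-x)(1-y)
\]
can only make the inequality harder. These bounds are nonnegative because $x,y,z\in[0,1]$. It therefore suffices to prove
\[
 1-\alpha^*-\beta^*-\gamma^*\ge 2\sqrt{\alpha^*\beta^*\gamma^*}.
\]

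\emph{Step 2 (identity).} Expanding gives
\[
 1-yz-x(1-z)-(1-x)(1-y)=y-yz+xz-xy .
\]
I will regroup this as
\[
 xz(1-y)+(1-x)y(1-z).
\]
Expanding the regrouped form gives $xz-xyz+y-yz-xy+xyz$, which agrees with the previous line. Both summands are nonnegative on $[0,1]^3$. This identity is the only nontrivial step. The key is to notice that the slack of the three rectangles splits into the two "complementary corner" boxes $xz(1-y)$ and $(1-x)y(1-z)$.

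\emph{Step 3 (AM--GM).} Apply AM--GM to the two summands:
\[
 xz(1-y)+(1-x)y(1-z)\ge 2\sqrt{x(1-x)\,y(1-y)\,z(1-z)}.
\]
The product under the root is exactly $\alpha^*\beta^*\gamma^*=yz\cdot x(1-z)\cdot(1-x)(1-y)$. This proves the extremal case, and by Step 1 the general inequality \eqref{eq:rectangle-ineq} follows.
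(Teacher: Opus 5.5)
Your proof is correct and is essentially the paper's argument: the same slack decomposition $1-yz-x(1-z)-(1-x)(1-y)=x(1-y)z+(1-x)y(1-z)$, whose two summands multiply to the product of the three rectangle areas. You then apply AM--GM and the same monotone reduction from $(\alpha,\beta,\gamma)$ to those areas.
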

\begin{proof}
Put $A=yz$, $B=x(1-z)$, and $C=(1-x)(1-y)$. The exact identities
\[
 1-A-B-C=D+E,\qquad ABC=DE,
\]
hold with
\[
 D=x(1-y)z,\qquad E=(1-x)y(1-z).
\]
Therefore $1-A-B-C\ge2\sqrt{ABC}$ by the arithmetic--geometric mean inequality. Decreasing $A,B,C$ to $\alpha,\beta,\gamma$ increases the left-hand side and decreases the right-hand side.
\end{proof}

\begin{lemma}[Arithmetic obstruction]\label{lem:arithmetic}
For distinct primes $p<q<r$, the integer system \eqref{eq:mass}--\eqref{eq:capacities} has no solution.
\end{lemma}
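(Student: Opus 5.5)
We plan to derive the contradiction from Lemma~\ref{lem:rectangles}, applied with normalized parameters read off from \eqref{eq:capacities}. Put
\[
 x=\frac ap,\qquad y=\frac bq,\qquad z=\frac cr,
\]
so that $1-z\ge d/r$ because $c+d\le r$. Set
\[
 \alpha=\frac{(p-1)n}{qr},\qquad \beta=\frac{(q-1)m}{pr},\qquad \gamma=\frac{(r-1)k}{pq}.
\]
Then the three capacity bounds in \eqref{eq:capacities} become exactly $\alpha\le yz$, $\beta\le x(1-z)$ and $\gamma\le(1-x)(1-y)$. Dividing \eqref{eq:mass} by $pqr$ gives
\[
 \frac{pn}{qr}+\frac{qm}{pr}+\frac{rk}{pq}=1 .
\]
Hence $1-\alpha-\beta-\gamma=(pn+qm+rk)/(pqr)$, and \eqref{eq:rectangle-ineq} turns into the purely arithmetic inequality
\begin{equation}\label{eq:plan-key}
 pn+qm+rk\ \ge\ 2\sqrt{(p-1)(q-1)(r-1)\,nmk}.
\end{equation}

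The task is then to show that \eqref{eq:plan-key} is incompatible with \eqref{eq:mass} when $n,m,k\ge1$. We work with the masses $X=p^2n$, $Y=q^2m$, $Z=r^2k$, which satisfy $X+Y+Z=pqr$. In these variables the left side of \eqref{eq:plan-key} is $X/p+Y/q+Z/r$, and the right side is $2\sqrt{(p-1)(q-1)(r-1)XYZ}/(pqr)$. We also intend to use the congruences forced by \eqref{eq:mass}: reducing modulo $p$, $q$, $r$ gives
\[
 q^2m+r^2k\equiv0\pmod p,\qquad p^2n+r^2k\equiv0\pmod q,\qquad p^2n+q^2m\equiv0\pmod r.
\]
These rule out tiny values of $n,m,k$ and push the product $XYZ$ away from the boundary of the simplex. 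For instance, $Z\ge r^2$ and $X+Y\ge p^2+q^2$, and $k\le pq/r<q$. The intended argument for $p\ge5$ runs as follows. Because the left side is linear and the right side scales like the square root of a product, the left minus the right is concave in suitable variables along the constraint, so its maximum over the admissible region is attained at extreme admissible points. We then check \eqref{eq:plan-key} fails there, using $(p-1)/p\ge4/5$ and comparable bounds for $q,r$. If necessary, we will additionally use the finer capacity structure, namely $bc+ad\le qr$-type coupling through $c+d\le r$ before relaxing it to $1-z\ge d/r$.

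For $p=2$ and $p=3$ we expect the normalized inequality to be too weak, so we would argue directly with integers. If $p=2$, then $a=1$ and the first capacity bound is $n\le bc$. Equation \eqref{eq:mass} reads $4n+q^2m+r^2k=2qr$, which forces $q^2m\equiv0\pmod 2$, so $m$ is even. Combining the reductions modulo $q$ and $r$ with $k<2q/r$ then leaves only a handful of configurations, each violating $(q-1)m\le d$ or $(r-1)k\le q-b$. For $p=3$ a similar case split on $a\in\{1,2\}$ and on $k$ (which satisfies $rk<3q$) should finish.

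The main obstacle is the step from \eqref{eq:plan-key} to a contradiction for $p\ge5$. This inequality alone is not obviously violated when $n=m=k=1$ and the primes are far apart: there the left side is $p+q+r$ against roughly $2\sqrt{pqr}$. So we must genuinely exploit \eqref{eq:mass} to force $nmk$ to be large, or else sharpen the rectangle relaxation. We would first try to show from the congruences that $Z=r^2k$ cannot take most of the mass while $X,Y$ stay minimal. Failing that, we would return to the unrelaxed capacities, keeping $c$ and $d$ separate.
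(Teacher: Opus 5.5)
Your reduction is correct and matches the paper's setup. The choice of $x,y,z,\alpha,\beta,\gamma$, the use of $d\le r-c$, and the resulting inequality $pn+qm+rk\ge 2\sqrt{(p-1)(q-1)(r-1)\,nmk}$ all agree with it.

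\textbf{The gap for $p\ge5$.} You never derive a contradiction from that inequality; you name it as the main obstacle and offer only heuristics. The concavity argument is not usable as written. A concave function attains its \emph{minimum}, not its maximum, at extreme points, so the logic is reversed. You have also not shown any convexity property of $\sqrt{XYZ}$ on the slice, and the vertices would still need checking.

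The missing step is a one-line expansion: square the inequality and divide by $nmk$.
\[
 4(p-1)(q-1)(r-1)\le\frac{(pn+qm+rk)^2}{nmk}
 =\frac{p^2n}{mk}+\frac{q^2m}{nk}+\frac{r^2k}{nm}
 +\frac{2pq}{k}+\frac{2pr}{m}+\frac{2qr}{n}
 \le pqr+2(pq+pr+qr).
\]
The last step uses only $n,m,k\ge1$, plus \eqref{eq:mass} for the three diagonal terms. Now
\[
 4(p-1)(q-1)(r-1)-pqr-2(pq+pr+qr)=3pqr-6(pq+pr+qr)+4(p+q+r)-4 .
\]
Substituting $p=5+s$, $q=7+t$, $r=11+u$ turns this into a polynomial in $s,t,u\ge0$ with positive coefficients and constant term $241$. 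So the mass identity is not used to force $nmk$ to be large, as you propose. It is used to bound the squared terms after dividing by $nmk$.

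Your worry about $n=m=k=1$ dissolves the same way. There \eqref{eq:mass} reads $p^2+q^2+r^2=pqr$, so $(p+q+r)^2=pqr+2(pq+pr+qr)<4(p-1)(q-1)(r-1)$. Moreover \eqref{eq:mass} gives $r<pq$, so the primes cannot be ``far apart'' in the way you feared.

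\textbf{The small cases.} For $p=2$, your parity claim is wrong: $4n+q^2m+r^2k=2qr$ only gives $m\equiv k\pmod 2$. No case analysis is needed, though. Since $a=1$, the third capacity bound gives $(r-1)k\le q-b\le q-1<r-1$ at once.

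For $p=3$, splitting on $a$ and $k$ is the right start, but ``should finish'' is not an argument. You need the following steps.
\begin{itemize}
\item The third capacity bound excludes $a=2$ and then $k\ge2$. This leaves $a=k=1$ and $r\le 2q-2b+1$.
\item You must then exclude $m\ge2$. The chain $2(q-1)\le(q-1)m\le d\le r-c\le 2q-2b+1-c$ forces $b=c=1$. That contradicts $2n\le bc$.
\item With $m=1$, \eqref{eq:mass} becomes $9n+q^2+r^2=3qr$. This is impossible modulo $3$, because $q^2+r^2\equiv2$.
\end{itemize}
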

\begin{proof}
First suppose $p\ge5$. Apply Lemma~\ref{lem:rectangles} with
\[
 x=\frac ap,\qquad y=\frac bq,\qquad z=\frac cr,
\]
and
\[
 \alpha=\frac{(p-1)n}{qr},\qquad
 \beta=\frac{(q-1)m}{pr},\qquad
 \gamma=\frac{(r-1)k}{pq}.
\]
The capacity bounds apply because $d\le r-c$. By \eqref{eq:mass},
\[
 1-\alpha-\beta-\gamma=\frac{pn+qm+rk}{pqr}.
\]
Consequently \eqref{eq:rectangle-ineq} gives
\begin{equation}\label{eq:lower-mass}
 \frac{(pn+qm+rk)^2}{nmk}\ge4(p-1)(q-1)(r-1).
\end{equation}
On the other hand, $n,m,k\ge1$ and \eqref{eq:mass} imply
\begin{align}
 \frac{(pn+qm+rk)^2}{nmk}
 &=\frac{p^2n}{mk}+\frac{q^2m}{nk}+\frac{r^2k}{nm}
     +\frac{2pq}{k}+\frac{2pr}{m}+\frac{2qr}{n}\notag\\
 &\le pqr+2(pq+pr+qr).\label{eq:upper-mass}
\end{align}
These inequalities are incompatible. Indeed
\begin{align*}
 &4(p-1)(q-1)(r-1)-pqr-2(pq+pr+qr)\\
 &\hspace{12mm}=3pqr-6(pq+pr+qr)+4(p+q+r)-4>0.
\end{align*}
To verify positivity, write $p=5+s$, $q=7+t$, and $r=11+u$, where $s,t,u\ge0$. The last expression is
\begin{equation}\label{eq:positive-certificate}
 241+127s+73t+37u+27st+15su+9tu+3stu.
\end{equation}
The lower bounds $5,7,11$ hold because the primes are distinct and ordered.

\begin{samepage}
If $p=2$, then $a=1$, so the last capacity inequality gives
\[
 r-1\le(r-1)k\le q-b\le q-1,
\]
contrary to $r>q$.
\end{samepage}

It remains to treat $p=3$. If $a=2$, the last capacity inequality gives $r-1\le q-b<r-1$, which is impossible. Thus $a=1$. If $k\ge2$, the same inequality gives $2(r-1)\le2(q-b)<2(r-1)$, again impossible. Hence $k=1$ and
\[
 r\le2q-2b+1.
\]
If $m\ge2$, the second capacity bound and $c+d\le r$ give
\[
 2(q-1)\le d\le r-c\le2q-2b+1-c,
\]
so $2b+c\le3$. Positivity of $b,c$ forces $b=c=1$, contradicting $2n\le bc$. Thus $m=1$. Equation \eqref{eq:mass} becomes
\[
 9n+q^2+r^2=3qr.
\]
Modulo $3$, the left-hand side is $2$, since $q$ and $r$ are primes different from $3$, whereas the right-hand side is $0$. This final contradiction proves the lemma.
\end{proof}

\begin{theorem}[Confinement]\label{thm:confinement}
Every member of $\E(p,q,r)$ is confined modulo at least one of $p,q,r$. Equivalently, at least one of the low functions in \eqref{eq:normalform} is constant.
\end{theorem}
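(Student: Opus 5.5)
The proof is a short contradiction that assembles the preceding lemmas. Let $A\in\E(p,q,r)$. By Theorem~\ref{thm:normalform}, $A$ has the unique normal form \eqref{eq:normalform}, with low-digit functions $f,g,h$ satisfying (K). Since $\Phi_N\mid A$, the low-character sum \eqref{eq:lowzero} vanishes.

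We first record the equivalence claimed in the statement. In the CRT coordinates, the residue of a point modulo $p$ is its first coordinate reduced modulo $p$, and this is $f(v,w)$. Hence $A$ lies in a single class modulo $p$ exactly when $f$ is constant. The same holds for $g$ with $q$ and for $h$ with $r$. So it suffices to show that one of $f,g,h$ is constant.

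Suppose instead that none of $f,g,h$ is constant. Lemma~\ref{lem:cylinders} applies and yields the normalized cylinder supports \eqref{eq:cylinder-supports}, with nonempty proper subsets $U_0$, $V_0$ and nonempty disjoint subsets $W_f$, $W_g$. The normalization there only multiplies \eqref{eq:lowzero} by a nonzero constant, so the vanishing is preserved. Lemma~\ref{lem:axis-multiplicities} then produces positive integers $n,m,k$ satisfying the mass identity \eqref{eq:mass} and the capacity bounds \eqref{eq:capacities}. By Lemma~\ref{lem:arithmetic}, this system has no solution for any primes $p<q<r$, which is a contradiction. Therefore some low function is constant.

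No step is a real obstacle here, since the substantive work is in Lemmas~\ref{lem:cylinders}--\ref{lem:arithmetic}. The one point to check is that the hypotheses of Lemma~\ref{lem:axis-multiplicities} are met after normalization: the shifted functions must still be nonzero somewhere, which is what makes $n,m,k$ positive. This holds because each original function is nonconstant.
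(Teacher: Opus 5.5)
Your proposal is correct and follows the paper's proof: assuming all three low functions are nonconstant, Lemmas~\ref{lem:cylinders} and~\ref{lem:axis-multiplicities} produce a solution of \eqref{eq:mass}--\eqref{eq:capacities}, which Lemma~\ref{lem:arithmetic} rules out. Your additional checks are also right: confinement modulo $p$ is equivalent to $f$ being constant, since the first CRT coordinate $pu+f(v,w)$ reduces to $f(v,w)$ modulo $p$, and the normalized functions remain nonzero somewhere, which makes $n,m,k$ positive.
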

\begin{proof}
If all three low functions were nonconstant, Lemmas~\ref{lem:cylinders} and \ref{lem:axis-multiplicities} would give a solution of \eqref{eq:mass}--\eqref{eq:capacities}, contradicting Lemma~\ref{lem:arithmetic}.
\end{proof}

\begin{remark}[Why the low-order zero is essential]\label{rem:defect}
Condition (K) alone does not force confinement. For $(p,q,r)=(3,5,7)$, take $U_0=\{0\}$, $V_0=\{0,1\}$, $W_f=\{0,1,2\}$, and $W_g=\{3,4,5,6\}$. Assign $f(0,w)=1$ and $f(1,w)=2$ on $W_f$; assign $g(0,3),\ldots,g(0,6)=1,2,3,4$; and assign the six values $1,\ldots,6$ to $h$ on $\{1,2\}\times\{2,3,4\}$. Put every other low value equal to zero. Lemma~\ref{lem:cylinders} gives (K), but the low-character sum is
\[
 105-3^2\cdot3-5^2-7^2=4,
\]
not zero. Thus the square-signature conditions allow unconfined tiles, and the additional low-order zero supplies the obstruction.
\end{remark}
\section{The semigroup criterion and the complete layer form}\label{sec:criterion}
In the CRT normal form, confinement modulo $r$ means exactly that $h$ is constant. We now recover a numerical-semigroup condition from that support restriction.

\begin{theorem}[Confined-support criterion]\label{thm:confined}
For any three distinct primes $p,q,r$ (without an ordering requirement), there is a member of $\E(p,q,r)$ confined modulo $r$ if and only if
\begin{equation}\label{eq:sharpsemi}
 r\in\langle p,q\rangle.
\end{equation}
\end{theorem}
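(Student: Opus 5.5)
Confinement modulo $r$ means that $h$ is constant in \eqref{eq:normalform}. After translating, we may take $h\equiv0$. The plan is to determine exactly which pairs $(f,g)$ survive (K) and \eqref{eq:lowzero} once $h$ is constant, and then to read off the semigroup condition.

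\emph{Reduction.} When $h$ is constant, two high-digit triples with different $w$ already satisfy (K) in the third coordinate, since their third high digits differ and their third low digits agree. So (K) only constrains pairs in a common $w$-layer. By Lemma~\ref{lem:2D}, (K) is equivalent to requiring, for each $w$, that $f(\cdot,w)$ or $g(\cdot,w)$ be constant. On such a layer the inner double sum factors. If $f(\cdot,w)\equiv c_w$, then
\[
 L_w:=\sum_{u,v}\zeta_p^{f(v,w)}\zeta_q^{g(u,w)}=q\,\zeta_p^{c_w}\sum_u\zeta_q^{g(u,w)},
\]
and symmetrically if $g(\cdot,w)\equiv d_w$. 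Condition \eqref{eq:lowzero} becomes $\sum_w L_w=0$, a vanishing sum of $pqr$ roots of unity of order dividing $pq$.

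\emph{Sufficiency.} Assume $r=ap+bq$ with $a,b\in\NN$. Make every layer doubly constant, $f(\cdot,w)\equiv c_w$ and $g(\cdot,w)\equiv d_w$, so that $L_w=pq\,\zeta_p^{c_w}\zeta_q^{d_w}$. Split the $r$ layers into $a$ blocks of $p$ layers and $b$ blocks of $q$ layers. On a $p$-block let $c_w$ run through $\br p$ with $d_w$ fixed. On a $q$-block let $d_w$ run through $\br q$ with $c_w$ fixed. Each block sums to zero, so \eqref{eq:lowzero} holds. Condition (K) holds by the reduction, and Theorem~\ref{thm:normalform} places the set in $\E(p,q,r)$ with $h$ constant.

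\emph{Necessity.} Let $N(i,j)$ be the number of points with low pair $(f,g)=(i,j)$. An $f$-constant layer contributes $q$ times a vector of total mass $p$ to row $c_w$; a $g$-constant layer contributes $p$ times a vector of total mass $q$ to column $d_w$. A doubly constant layer may be assigned to either type. Vanishing at order $pq$ means, by the two-prime structure of vanishing sums (de Bruijn), that $N(i,j)=x_i+y_j$. The steps are:
\begin{enumerate}[label=(\roman*)]
 \item Write $\alpha$ and $\beta$ for the numbers of $f$-type and $g$-type layers, so $\alpha+\beta=r$.
 \item Compare row sums, column sums and single entries of $N$ with these layer contributions, using the basis argument of Lemma~\ref{lem:axis-multiplicities} to equalize coefficients.
 \item Extract divisibilities of the form $p\mid(\text{an }f\text{-type mass})$ and $q\mid(\text{a }g\text{-type mass})$.
 \item Divide through by $pq$ to obtain nonnegative integers $a,b$ with $r=ap+bq$.
\end{enumerate}
The degenerate cases behave as expected. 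If all layers have the same type, for instance $f$-type, the total contribution is $q$ times a multiset of $pr$ values of $g$. This forces $q\mid pr$, which is impossible; such row and column divisibilities are what rule out confinement at a second prime.

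\emph{Main obstacle.} The bookkeeping in step (iii) is where I expect the difficulty. The cyclic-sum decomposition $x_i+y_j$ is not unique, and the naive total-mass identity $q\sum x_i+p\sum y_j=pqr$ is solvable for every $r$. The semigroup condition must therefore come from the layerwise quantization: each $f$-layer sits in a single row with multiplicity divisible by $q$. My first attempt would be to reduce $\sum_w L_w=0$ modulo the prime ideal above $p$ (set $\zeta_p\mapsto1$), and separately modulo the ideal above $q$. This yields the congruences on the row and column masses directly, and combining them should give the representation $r=ap+bq$.
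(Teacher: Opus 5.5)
\textbf{Sufficiency and the reduction are fine.} Your sufficiency argument is correct. It is the doubly constant layer configuration that opens the proof of Theorem~\ref{thm:aperiodicgeneral}; the paper instead cites the Ap\'ery family with $L=r$, $e=1$. The layerwise reduction via Lemma~\ref{lem:2D} and the additive form $N(i,j)=x_i+y_j$ are also correct.

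\textbf{The gap is necessity.} Steps (ii)--(iv) name no mechanism, and the one you propose cannot by itself supply it. Reducing $\sum_w L_w=0$ modulo the primes above $p$ or $q$ only yields congruences, for instance that the column sums of $N$ are congruent modulo $p$. But $r\in\langle p,q\rangle$ is a positivity condition, not a congruence condition: $7\notin\langle3,5\rangle$, although $22\equiv7\pmod{15}$ lies in it. Some nonnegativity has to enter, and your plan as written has none.

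\textbf{Your degenerate-case remark is false.} If every layer is $f$-constant, the sum is $q\sum_w\zeta_p^{c_w}\sum_u\zeta_q^{g(u,w)}$, and the $w$-dependent phases $\zeta_p^{c_w}$ cannot be dropped. Your own sufficiency construction has all layers doubly constant, hence all assignable to $f$-type, and it still vanishes. Likewise, the stabilizer-order-$q$ members in Theorem~\ref{thm:aperiodicgeneral} have every $f_w$ constant. Only a globally constant $f$, i.e.\ simultaneous confinement modulo $p$, forces $q\mid pr$.

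\textbf{The missing idea is to use the semigroup at the level of individual entries.}
\begin{enumerate}[label=(\alph*)]
\item A $g$-constant layer adds a multiple of $p$ to every entry of $N$, and an $f$-constant layer adds a multiple of $q$. Hence every entry lies in $\langle p,q\rangle$.
\item Independently, the free high digit makes every row sum a multiple of $p$ and every column sum a multiple of $q$.
\item In $N(i,j)=x_i+y_j$, $q$ times a row difference $x_i-x_{i'}$ is a difference of row sums, so $p\mid x_i-x_{i'}$. Similarly $q\mid y_j-y_{j'}$.
\end{enumerate}
Hence $N(i,j)=t+p\alpha_i+q\beta_j$, where $\alpha_i,\beta_j\in\NN$, $\min\alpha=\min\beta=0$, and $t$ is the minimum entry.

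Put $a=\sum_i\alpha_i$ and $b=\sum_j\beta_j$. The total mass $pqr$ gives $r=t+a+b$. The row with $\alpha_i=0$ gives $p\mid t+b$, and the column with $\beta_j=0$ gives $q\mid t+a$. By (a), write $t=pc+qd$ with $c,d\in\NN$. Then
\[
 r=(qd+b)+(pc+a),
\]
where $qd+b=(t+b)-pc$ is a nonnegative multiple of $p$ and $pc+a=(t+a)-qd$ is a nonnegative multiple of $q$. This is exactly $r\in\langle p,q\rangle$. The positivity comes from $\alpha,\beta\ge0$ and from $t\in\langle p,q\rangle$, and neither is visible to a congruence argument.
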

\begin{proof}
Sufficiency is Theorem~\ref{thm:construction} with $L=r$, $e=1$.
For necessity, translate the third low digit to zero, so $h=0$. By Lemma~\ref{lem:2D}, each fixed-$w$ layer has $f(\cdot,w)$ constant or $g(\cdot,w)$ constant. Define
\[
 W_{ij}=\#\{(u,v,w):f(v,w)=i,\ g(u,w)=j\}.
\]
Every entry belongs to $\langle p,q\rangle$: a layer with constant $g$ contributes multiples of $p$ to each entry, while one with constant $f$ contributes multiples of $q$. Independently, every row sum is a multiple of $p$ and every column sum a multiple of $q$.

The zero \eqref{eq:lowzero} becomes $\sum_{i,j}W_{ij}\zeta_p^i\zeta_q^j=0$. The products $\zeta_p^i\zeta_q^j$, $i<p-1$, $j<q-1$, form a basis of $\QQ(\zeta_{pq})$. Eliminating the last row and column therefore gives
\[
 W_{ij}-W_{i,q-1}-W_{p-1,j}+W_{p-1,q-1}=0.
\]
Hence $W$ is an additive integer matrix. Choose a position $(i_0,j_0)$ of minimum value $t$. A row difference is independent of the column; multiplying it by the number $q$ of columns gives a multiple of $p$, so coprimality forces the difference itself to be a multiple of $p$. The analogous argument forces each column difference to be a multiple of $q$. Thus
\begin{equation}\label{eq:matrixnormal}
 W_{ij}=t+p\alpha_i+q\beta_j,\qquad
 \alpha_i,\beta_j\in\NN,\quad\min\alpha=\min\beta=0.
\end{equation}
Put $a=\sum_i\alpha_i$, $b=\sum_j\beta_j$. Total mass and row/column divisibility yield
\[
 r=t+a+b,\qquad t+b=pv,\qquad t+a=qu
\]
for nonnegative integers $u,v$. Since the minimum entry is itself in the semigroup, write $t=pc+qd$, $c,d\geq0$. Then
\[
 r=p(v-c)+q(u-d).
\]
Both coefficients are nonnegative: $pv=t+b\geq pc$ and $qu=t+a\geq qd$. This proves \eqref{eq:sharpsemi}.
\end{proof}

\begin{remark}[An equivalent mass criterion]\label{rem:masscriterion}
For any integer $L\ge1$ coprime to $pq$, membership $L\in\langle p,q\rangle$ is equivalent to the assertion that
\[
 pq=pU+qV+LW,\qquad U,V,W\in\NN,
\]
has only the solutions $(q,0,0)$ and $(0,p,0)$. Indeed, in a representation $L=ap+bq$, coprimality forces $a,b\ge1$. A solution with $W>0$ would express $pq$ as a combination of $p$ and $q$ with both coefficients positive, which reduction modulo $p$ excludes. Conversely, choose $b\in\{1,\ldots,p-1\}$ with $bq\equiv L\pmod p$. If $L\notin\langle p,q\rangle$, then $a=(bq-L)/p\ge1$, and $pq=pa+q(p-b)+L$ gives a solution with $W=1$. Thus the admissible Ap\'ery scales are exactly those for which the third term cannot occur in this mass equation.
\end{remark}

\begin{proof}[Proof of Theorem~\ref{thm:main}]
By Theorem~\ref{thm:confinement}, every member of $\E(p,q,r)$ is confined modulo some prime dividing $M$. Confinement modulo $p$ would give $p\in\langle q,r\rangle$ by Theorem~\ref{thm:confined}, which is impossible. Confinement modulo $q$ would give $q\in\langle p,r\rangle$; since $q<r$ and $p\nmid q$, this too is impossible. Thus confinement is modulo $r$, and Theorem~\ref{thm:confined} proves \eqref{eq:maincriterion}.
\end{proof}

\begin{corollary}[Complete layer form]\label{cor:layerform}
After translation in the third CRT coordinate, the members of $\E(p,q,r)$ are exactly the sets
\begin{equation}\label{eq:layerform}
 A=\{(pu+f_w(v),\ qv+g_w(u),\ rw):
       u\in\br p,\ v\in\br q,\ w\in\br r\},
\end{equation}
where, for every $w$, either $f_w$ or $g_w$ is constant, and
\begin{equation}\label{eq:layerzero}
 \sum_{w=0}^{r-1}
 \left(\sum_{v=0}^{q-1}\zeta_p^{f_w(v)}\right)
 \left(\sum_{u=0}^{p-1}\zeta_q^{g_w(u)}\right)=0.
\end{equation}
\end{corollary}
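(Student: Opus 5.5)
We prove both inclusions by specializing the normal form of Theorem~\ref{thm:normalform} to $h$ constant.

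\textbf{Members of $\E(p,q,r)$ have the layer form.} Let $A\in\E(p,q,r)$. By Theorem~\ref{thm:main}, $A$ is confined modulo $r$, so in \eqref{eq:normalform} the low function $h$ is constant. Translating in the third CRT coordinate by $-h$ gives $h=0$; this translation preserves every cyclotomic divisibility, and the low-digit sum \eqref{eq:lowzero} is only multiplied by a root of unity. Now fix $w$ and write $f_w(v)=f(v,w)$ and $g_w(u)=g(u,w)$. The points with high third digit $w$ form a $p\times q$ grid parametrized exactly as in Lemma~\ref{lem:2D}. Two points in different layers have different high $w$-digits and equal third low digit $0$, so (K) holds automatically between layers. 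Hence (K) for $A$ is equivalent to (K) within each layer, and Lemma~\ref{lem:2D} says this holds if and only if $f_w$ or $g_w$ is constant. This yields \eqref{eq:layerform}.

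\textbf{The vanishing condition.} With $h=0$, the summand in \eqref{eq:lowzero} is $\zeta_p^{f_w(v)}\zeta_q^{g_w(u)}$. For fixed $w$ the sum over $(u,v)$ is a product, because $f_w$ depends only on $v$ and $g_w$ depends only on $u$. Summing over $w$ gives exactly \eqref{eq:layerzero}.

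\textbf{Sets of the layer form lie in $\E(p,q,r)$.} Take any $A$ as in \eqref{eq:layerform} with each layer having a constant low function and satisfying \eqref{eq:layerzero}. Such an $A$ is of the form \eqref{eq:normalform} with $h=0$, and (K) holds by the layer argument above. Theorem~\ref{thm:normalform} then supplies the seven square-order zeros and cardinality $N$. The equivalence \eqref{eq:lowzero}, which is the same as \eqref{eq:layerzero}, supplies $\Phi_N\mid A$. Hence $A\in\E(p,q,r)$.

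\textbf{Where care is needed.} The only delicate step is checking that confinement modulo $r$ corresponds exactly to constancy of $h$. A point's residue modulo $r$ is its third CRT coordinate reduced modulo $r$, which equals $h(u,v)$. So $A$ lies in one residue class modulo $r$ precisely when $h$ is constant. Everything else is a direct specialization of earlier results.
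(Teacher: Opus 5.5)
Your proof is correct and follows essentially the same route as the paper. Confinement modulo $r$ from Theorem~\ref{thm:main} makes $h$ constant, Lemma~\ref{lem:2D} reduces (K) to each $w$-layer while the common zero third low digit separates distinct layers, and Theorem~\ref{thm:normalform} together with the factorized low-character sum gives both inclusions.
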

\begin{proof}
Necessity follows from Theorem~\ref{thm:main}, Theorem~\ref{thm:normalform}, and Lemma~\ref{lem:2D}. For sufficiency, pairs in one $w$-layer satisfy (K) by Lemma~\ref{lem:2D}; pairs in different layers are separated in the third coordinate because their third low digits are all zero. Theorem~\ref{thm:normalform} and \eqref{eq:layerzero} therefore give all the required zeros.
\end{proof}

\begin{corollary}[Primitive quotient and complement slices]\label{cor:primitivequotient}
Let $p<q<r$ be distinct primes, $N=pqr$, $M=N^2$, and
$A\in\E(p,q,r)$. Then
\[
 \gcd(M,A-A)=r,
 \qquad |\Stab(A)|\mid pq.
\]
After translation, write $A=rC$, with $C\subset\ZZ_{M/r}$. Then
\[
 \gcd(M/r,C-C)=1,\qquad S_C=\{p^2,q^2,r\}.
\]
The set $C$ is a tile of $\ZZ_{M/r}$ and satisfies \emph{(T2)}, and $\Phi_{pq}$ is unsupported
for $C$. Conversely, every set $C\subset\ZZ_{M/r}$ of cardinality
$N$ with this signature, satisfying \emph{(T2)} and
$\Phi_{pq}\mid C$, gives $rC\in\E(p,q,r)$.

Every complement of $rC$ has the unique form
\[
 B=\bigcup_{j=0}^{r-1}(j+rD_j),
 \qquad C\oplus D_j=\ZZ_{M/r}\quad(0\le j<r).
\]
In particular, $|D_j|=pq$, and the complements $D_j$ may be
chosen independently.
\end{corollary}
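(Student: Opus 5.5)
We argue from the complete layer form of Corollary~\ref{cor:layerform}, which by Theorem~\ref{thm:main} places $A$, after translation, in the form \eqref{eq:layerform}.

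\emph{The gcd and the stabilizer.} In CRT coordinates every point has third coordinate $rw$, a multiple of $r$, so $r\mid\gcd(M,A-A)$. Next we show that no higher power of $r$ and neither $p$ nor $q$ divides this gcd. If $r^2$ divided every difference, all points would share the same $w$; but $w$ ranges over $\br r$. If $p$ divided every difference, all first low digits $f_w(v)$ would be equal, so $A$ would be confined modulo $p$. Theorem~\ref{thm:confined} would then give $p\in\langle q,r\rangle$, which is impossible. The same argument excludes $q$, since $q\notin\langle p,r\rangle$. Hence $\gcd(M,A-A)=r$.

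For the stabilizer, note that $A$ tiles $\ZZ_M$ with $|A|=N$, so $|\Stab(A)|$ divides $N$. Suppose $r\mid|\Stab(A)|$. Then $A$ is invariant under an element of order $r$, namely $(0,0,r\cdot\text{unit})$ in the $r^2$-coordinate, up to CRT. This forces $A$ to be a union of cosets, so $A(X)$ is divisible by $\Phi_{r^2}$ in the quotient sense, and also by $(X^{M}-1)/(X^{M/r}-1)$-type factors. The simpler route is through the digits: a translation of order $r$ in the third coordinate shifts $rw$ by $r\cdot c$, and an element of order $r$ in $\ZZ_M$ has nonzero $r$-component of the form $rc$ with $c\ne0$. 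Invariance then needs the layer data $(f_w,g_w)$ to be $w$-periodic with period $c$, which is possible. So we need a different argument: $r\mid|\Stab(A)|$ would give $\Phi_{r}\mid A(X)$, because a set invariant under a subgroup of order $r$ has mask divisible by $(X^{M}-1)/(X^{M/r}-1)$, which contains $\Phi_r$. This contradicts $S_A=\{p^2,q^2,r^2\}$. Hence $|\Stab(A)|\mid pq$.

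\emph{The quotient $C$.} Write $A=rC$ with $C\subset\ZZ_{M/r}$. Primitivity of $C$ follows from $\gcd(M,A-A)=r$. For the cyclotomic divisors we use the relation $A(X)=C(X^r)$ modulo $X^M-1$, together with the standard fact that $\Phi_s(X^r)$ factors as $\Phi_{rs}$ when $r\mid s$, and as $\Phi_s\Phi_{rs}$ when $r\nmid s$. This translates the divisor $\Phi_{p^2}$ of $A$ into $\Phi_{p^2}\mid C$ (orders coprime to $r$ are preserved), $\Phi_{r^2}\mid A$ into $\Phi_r\mid C$, and $\Phi_{pqr}\mid A$ into $\Phi_{pq}\mid C$. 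The missing divisors $\Phi_p,\Phi_q$ stay missing, and so does $\Phi_{r^2}$, giving $S_C=\{p^2,q^2,r\}$. Conditions (T1) and (T2) then transfer by the same dictionary: the mixed orders $s_1s_2$ for $A$ that involve $r^2$ become mixed orders involving $r$ for $C$. Since $p,q\mid|C|=N$ while $\Phi_p,\Phi_q\nmid C$, the divisor $\Phi_{pq}$ is unsupported. By Coven--Meyerowitz sufficiency, $C$ is a tile. The converse direction reverses the same dictionary.

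\emph{Complements.} For a complement $B$ of $rC$, split it by residue modulo $r$: $B=\bigcup_j(j+rD_j)$. Since $A\subset r\ZZ_M$, the tiling $A\oplus B=\ZZ_M$ decomposes coset by coset into $rC\oplus rD_j=r\ZZ_M$, which is equivalent to $C\oplus D_j=\ZZ_{M/r}$. This decomposition is unique and the choices of $D_j$ are independent. Counting gives $|D_j|=(M/r)/N=pq$.

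The main obstacle is the stabilizer step. The cleanest approach is the mask argument: invariance under a subgroup of order $r$ contributes a factor $\Phi_r$, which contradicts $S_A$.
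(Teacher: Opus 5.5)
You have the right overall structure, but there is a genuine gap in the stabilizer step: the argument you settle on is false, and the route you dismissed is the one that works. The gcd computation, the transfer of the signature and (T2) through $A(X)=C(X^r)$, the converse, and the slicing of complements are all fine and essentially match the paper. For $r^2\nmid\gcd(M,A-A)$ you read it off the layer form, while the paper evaluates at $\zeta_{r^2}$; either works.

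\textbf{Why the mask argument fails.} If $A$ is invariant under the order-$r$ subgroup $\langle M/r\rangle$, its mask is divisible modulo $X^M-1$ by
\[
 \frac{X^M-1}{X^{M/r}-1}=\prod_{s\mid M,\ r^2\mid s}\Phi_s(X).
\]
This product does \emph{not} contain $\Phi_r$. Since $r\mid M/r=p^2q^2r$, the factor $\Phi_r$ cancels; indeed the quotient takes the value $r$ at $\zeta_r$. Your claim would only hold if $v_r(M)=1$. Invariance produces only zeros of orders divisible by $r^2$, such as $\Phi_{r^2}$, which $A$ already has, so nothing in $S_A$ is contradicted. At a primitive $N$th root the same factor also evaluates to $r$. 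So the mask route cannot close without examining the individual layers anyway.

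\textbf{The route you abandoned.} Your reason for dropping it (``which is possible'') is the error. The order-$r$ element acts on the layer index $w\in\ZZ_r$ by a shift $c\not\equiv0\pmod r$. Since $r$ is prime, this shift generates $\ZZ_r$, so $w$-periodicity forces all $r$ layers $(f_w,g_w)$ to be identical. The left side of \eqref{eq:layerzero} is then $r$ times the contribution of a single layer.

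In that layer one low vector is constant, contributing a factor $q\zeta_p^{i}$ or $p\zeta_q^{j}$. The other factor is a sum of $p$ roots of unity of order $q$, or of $q$ roots of order $p$. Such a sum cannot vanish: a vanishing integer combination of the $q$th roots of unity has all coefficients equal, so its total weight is a multiple of $q$, which would force $q\mid p$ (respectively $p\mid q$). Hence the low-character sum is nonzero and $\Phi_N\nmid A$, a contradiction. This is exactly the paper's proof that $|\Stab(A)|\mid pq$.

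\textbf{Minor points.}
\begin{itemize}
\item $|\Stab(A)|$ divides $|A|$ because $A$ is a union of $\Stab(A)$-cosets, not because $A$ tiles.
\item When you invoke Coven--Meyerowitz sufficiency for $C$, say that the complement has period $\mathrm{lcm}(S_C)=p^2q^2r=M/r$. Alternatively, take the slices $D_j$ of the complement $B_\square$ from Theorem~\ref{thm:normalform}.
\end{itemize}
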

\begin{proof}
Theorem~\ref{thm:main} gives confinement modulo $r$. Confinement
modulo $p$ or $q$ is excluded by Theorem~\ref{thm:confined} and
the ordering of the primes. Confinement modulo $r^2$ would give
$A(\zeta_{r^2})=\zeta_{r^2}^{t}|A|\ne0$ for some integer $t$,
contrary to $\Phi_{r^2}\mid A$. Thus $\gcd(M,A-A)=r$.

The stabilizer acts freely on $A$, so its order divides $N$.
If that order were divisible by $r$, the order-$r$ subgroup of
$\ZZ_M$ would preserve $A$. In the layer form of
Corollary~\ref{cor:layerform}, this would make all $r$ layers
identical. The low-character contribution of each individual
layer is nonzero: one low vector is constant, and vanishing
of the other vector's sum of prime-order roots would require
$p\mid q$ or $q\mid p$. The total low-character sum could
therefore not vanish. Hence $|\Stab(A)|\mid pq$.

Choose the common residue of $A$ modulo $r$ in $\br r$ and
subtract it. This gives the ordinary mask identity
$A(X)=C(X^r)$ for the translated set. The preceding gcd
identity gives primitivity of $C$. Evaluating this mask at
roots of unity shows that substitution by $X^r$ preserves
the $p$- and $q$-power levels and raises every positive
$r$-power level by one. Consequently
$S_C=\{p^2,q^2,r\}$, and the seven square-signature zeros
of $A$ give \emph{(T2)} for $C$. The order-$pqr$ zero of
$A$ gives the order-$pq$ zero of $C$, which is unsupported
by its signature. The same root-order calculation proves
the converse.

Finally, put
$D_j=\{(b-j)/r:b\in B,\ b\equiv j\pmod r\}$, interpreted
in $\ZZ_{M/r}$. Representations of residues congruent to $j$
in $rC+B$ correspond exactly to representations in $C+D_j$.
Thus $rC\oplus B=\ZZ_M$ holds precisely when every $D_j$
is a complement of $C$, proving the last assertion.
\end{proof}

\subsection{All stabilizer orders}
The stabilizer restriction in Corollary~\ref{cor:primitivequotient} is sharp. All four possibilities can be obtained by modifying the same collection of layers.

\begin{theorem}[All stabilizer orders]\label{thm:aperiodicgeneral}
If $p<q<r$ are primes and $r\in\langle p,q\rangle$, the stabilizer orders realized by members of $\E(p,q,r)$ are exactly $1,p,q,pq$. Each occurs in a tiling with the same aperiodic complement $B_\square$.
\end{theorem}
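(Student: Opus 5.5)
The restriction to $\{1,p,q,pq\}$ is already contained in Corollary~\ref{cor:primitivequotient}, which gives $|\Stab(A)|\mid pq$. So it remains to realize each of the four orders by an explicit member of $\E(p,q,r)$. By Theorem~\ref{thm:normalform} every such member tiles with $B_\square$. Separately, I will check that $B_\square$ is aperiodic: in CRT coordinates it is a product of the intervals $\br s\subset\ZZ_{s^2}$, and a nonzero translate of such an interval is never the interval itself.

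\textbf{Construction.} I will build all four examples in the layer form of Corollary~\ref{cor:layerform}.
\begin{enumerate}
\item Write $r=ap+bq$. Coprimality forces $a,b\ge1$, as in Remark~\ref{rem:masscriterion}.
\item Split the $r$ values of $w$ into $bq$ layers of type~I and $ap$ layers of type~II.
\item In a type~I layer, put $f_w\equiv0$. The layer sum in \eqref{eq:layerzero} is then $q\sum_u\zeta_q^{g_w(u)}$.
\item In a type~II layer, put $g_w\equiv0$. The layer sum is then $p\sum_v\zeta_p^{f_w(v)}$.
\item \eqref{eq:layerzero} holds as soon as the $pbq$ values $g_w(u)$ over type~I layers are equidistributed on $\ZZ_q$, each residue occurring $pb$ times. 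Likewise we need the $qap$ values $f_w(v)$ over type~II layers to be equidistributed on $\ZZ_p$, each residue occurring $qa$ times.
\end{enumerate}
The basic choice makes every $g_w$ and every $f_w$ constant. Concretely, each residue of $\ZZ_q$ is the constant value on exactly $b$ type~I layers, and each residue of $\ZZ_p$ is the constant value on exactly $a$ type~II layers. Corollary~\ref{cor:layerform} then places this set in $\E(p,q,r)$.

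\textbf{Stabilizer computation.} Since the stabilizer order divides $pq$, the stabilizer is generated by its intersections with the order-$p$ subgroup and the order-$q$ subgroup.
\begin{itemize}
\item The order-$p$ subgroup of $\ZZ_M$ is generated by $(p,0,0)$ in CRT coordinates. This element shifts the high digit $u$ cyclically and leaves the other coordinates fixed.
\item In layer form, $A$ is invariant under it exactly when every $g_w$ is independent of $u$. The lower-order elements of this subgroup, $(p j,0,0)$, act the same way since $p$ is prime.
\item Symmetrically, $A$ is invariant under the order-$q$ subgroup exactly when every $f_w$ is constant.
\end{itemize}

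\textbf{Realizing each order.} The basic choice therefore has stabilizer of order $pq$. To destroy the $p$-part, pick two type~I layers whose constants $c_1\ne c_2$ differ; these exist because there are $bq\ge q\ge2$ such layers, with every residue used. Redistribute their $2p$ values so that one layer receives $c_1$ on some $u$ and $c_2$ on others. This keeps the multiset of values, hence \eqref{eq:layerzero}, and keeps $f_w\equiv0$ constant on those layers, so the layer form remains valid. Independently, doing the analogous swap between two type~II layers destroys the $q$-part.

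Applying neither, one, the other, or both modifications yields orders $pq$, $q$, $p$ and $1$. The step needing most care is this stabilizer identification: one must confirm that translation by an order-$p$ element acts on the layer form purely as the cyclic shift of $u$, with no carry into the low digit.
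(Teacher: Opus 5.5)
Your proposal is correct and follows essentially the same route as the paper. It uses the same base configuration: $a$ full $p$-cycles of constant $f$-layers and $b$ full $q$-cycles of constant $g$-layers, with $h=0$. It then applies the same two independent value-swaps, one between a pair of layers of each type. The stabilizer is read off by the same criterion: the order-$p$ period is present exactly when every $g_w$ is constant, and the order-$q$ period exactly when every $f_w$ is constant. Corollary~\ref{cor:primitivequotient} excludes an $r$-part, and Theorem~\ref{thm:normalform} supplies the complement $B_\square$. Your explicit check that translation by $(p,0,0)$ only shifts the high digit $u$ (with no carry, since the low digits lie in $\br p$) is a welcome detail that the paper leaves implicit.
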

\begin{proof}
Write $r=ap+bq$, where $a,b\geq1$; neither coefficient can vanish because $r$ is a distinct prime. Start with $r$ layers with $h=0$. Choose their constant low pairs $(f,g)$ as $a$ copies of the $p$-cycle $(i,0)$, $i\in\br p$, and $b$ copies of the $q$-cycle $(0,j)$, $j\in\br q$. Each layer is a full high-digit rectangle. The low-character sum is $pq$ times a sum of full $p$- and $q$-cycles, hence zero.

Every low vector is constant in this initial construction, so it has both the order-$p$ and order-$q$ periods. Corollary~\ref{cor:primitivequotient} gives stabilizer order exactly $pq$.

For a first modification, choose the layers $(0,0),(1,0)$ from one $p$-cycle. Replace their constant $f$-vectors by
\[
 (0,\ldots,0,1),\qquad(1,\ldots,1,0),
\]
respectively, leaving $g=0$. Their combined low-character contribution is unchanged. For a second, independent modification, choose two layers $(0,0),(0,1)$ from one $q$-cycle, distinct from the two previously selected layers; leave $f=0$ and replace their constant $g$-vectors by the analogous complementary vectors of length $p$. Again the combined contribution is unchanged. Every layer satisfies Lemma~\ref{lem:2D}, and distinct layers satisfy (K) because $h=0$.

The order-$p$ subgroup cycles the high $u$-digits, so it preserves $A$ exactly when every $g_w$ is constant. Likewise, the order-$q$ period is present exactly when every $f_w$ is constant. Applying only the first modification therefore gives stabilizer order $p$; applying only the second gives order $q$; applying both gives order $1$. No order-$r$ period is possible by Corollary~\ref{cor:primitivequotient}. All four sets tile with $B_\square$ by Theorem~\ref{thm:normalform}. Its coordinate intervals are aperiodic, so their product is aperiodic.
\end{proof}

In particular, the semigroup criterion also characterizes existence with both factors aperiodic. Every $A\in\E(p,q,r)$ has the precise common divisor $r$, while its quotient by $r$ is primitive.
\section{Ap\'ery constructions and all complements}\label{sec:apery-tilings}
Fix distinct primes $p,q,r$, integers $m\geq1$, $e\geq0$, and set
\[
 Q=pq,\quad L=r^m\in\langle p,q\rangle,\quad h=r^e,\quad
 K=QhL,\quad M=QK.
\]
The parameter $e$ controls the number of unsupported scales, while $m$ controls the Ap\'ery core size.

\begin{theorem}[Construction with arbitrarily many unsupported scales]\label{thm:construction}
The polynomials
\begin{equation}\label{eq:family}
 A(X)=[Q]_{X^K}[L]_{X^h}\Phi_Q(X^h),\qquad
 B_0(X)=[h]_X[Q]_{X^{hL}}
\end{equation}
are $0$--$1$ masks and satisfy
\[
 A\oplus B_0=\ZZ_M,\qquad |A|=QL,\quad |B_0|=Qh.
\]
Their exact prime-power signatures are
\begin{align*}
 S_A&=\{p^2,q^2\}\cup\{r^{e+1},\ldots,r^{e+m}\},\\
 S_{B_0}&=\{p,q\}\cup\{r,r^2,\ldots,r^e\},
\end{align*}
with the second union empty for $e=0$. Both satisfy (T1), (T2). Every factor
\[
 \Phi_{Q},\Phi_{Qr},\ldots,\Phi_{Qr^e}
\]
is unsupported for $A$.
\end{theorem}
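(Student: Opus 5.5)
We first show that both polynomials in \eqref{eq:family} are $0$--$1$ masks of subsets of $\br M$. By Lemma~\ref{thm:apery}, $C_L(Y)=[L]_Y\Phi_Q(Y)$ has $0$--$1$ coefficients because $L\in\langle p,q\rangle$, and its degree is less than $QL$. Substituting $Y=X^h$ gives a $0$--$1$ polynomial with exponents that are multiples of $h$ and lie below $hQL=K$. Multiplying by $[Q]_{X^K}$ places $Q$ disjoint translates at the shifts $0,K,\ldots,(Q-1)K$, all below $M=QK$. Hence $A$ is a $0$--$1$ mask with $|A|=QL$. For $B_0$, the product $[h]_X[Q]_{X^{hL}}$ is a sum of translates of $\{0,\ldots,h-1\}$ by multiples of $hL\ge h$, so it is also $0$--$1$, and $|B_0|=Qh$.

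For the tiling, we use the identity $\Phi_Q(Y)[Q]_{Y^L}\cdots$. More directly, we take
\[
 [L]_{X^h}\Phi_Q(X^h)\,[Q]_{X^{hL}}
 =\Phi_Q(X^h)\frac{X^{hLQ}-1}{X^h-1}.
\]
We then multiply by $[h]_X$ to get
\[
 \Phi_Q(X^h)\,\frac{X^{K}-1}{X-1}.
\]
Multiplying in addition by $[Q]_{X^K}$ gives
\[
 A(X)B_0(X)=\Phi_Q(X^h)\,[M]_X.
\]
Reduction modulo $X^M-1$ is the remaining step. Since $[M]_X\cdot X^j\equiv[M]_X$, we have $[M]_X\,P(X)\equiv P(1)[M]_X$, and $\Phi_Q(1)=1$. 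This gives \eqref{eq:tiling}, and the cardinalities multiply to $Q^2hL=M$.

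Signatures are computed factor by factor. The cyclotomic factors involved are as follows:
\begin{itemize}
 \item $[Q]_{X^K}$ contributes $\Phi_d$ for $d\mid QK$, $d\nmid K$. Among prime powers these are only $p^2,q^2$, since $v_p(K)=v_q(K)=1$ and $v_r(QK)=v_r(K)$.
 \item $[L]_{X^h}$ contributes $\Phi_d$ with $d\mid hL$, $d\nmid h$, giving exactly $r^{e+1},\ldots,r^{e+m}$.
 \item $\Phi_Q(X^h)$ is a product of $\Phi_{Qd}$ with $d\mid h$, which contains no prime-power orders.
 \item $B_0$: $[h]_X$ gives $r,\ldots,r^e$, and $[Q]_{X^{hL}}$ gives $d\mid QhL$, $d\nmid hL$, whose prime powers are $p,q$.
\end{itemize}
This yields $S_A$ and $S_{B_0}$. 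Then (T1) follows from $\prod\Phi_s(1)=p\cdot q\cdot r^m=QL$ and $pq\,r^e=Qh$.

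For (T2), we check each relevant order. For $A$, a mixed order $s_1\cdots s_j$ built from $p^2,q^2,r^{e+i}$ and involving at least two primes is a divisor $d$ of $M$ whose $p$- or $q$-part is a square. If $d\mid QK$ and $d\nmid K$ this is immediate, because $v_p(d)=2>v_p(K)$; in either case $[Q]_{X^K}$ vanishes there, so (T2) holds. For $B_0$, mixed orders built from $p,q,r^i$ ($i\le e$) must be checked individually:
\begin{itemize}
 \item An order containing $r^i$ but not both $p$ and $q$, for example $pr^i$: it does not divide $hL$, so $[Q]_{X^{hL}}$ vanishes at $\zeta_{pr^i}$ because $(\zeta_{pr^i})^{hL}$ is a primitive $p$th root of unity, and $p\mid Q$. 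The same holds for $qr^i$ and $pq$.
 \item An order containing both $p$ and $q$: $\zeta^{hL}$ then has order $pq$, whose $Q$th power is $1$, not a zero of $[Q]$. For such orders we instead use $[h]_X$, which vanishes at any $d\mid h\cdot(\text{anything})$ with $d\nmid h$ only if $d\mid h$. That fails, so this case needs care.
\end{itemize}
Rechecking $[Q]_Y$ at $Y$ of order $pq$: $[Q]_Y=(Y^Q-1)/(Y-1)=0$ whenever $Y^Q=1$ and $Y\ne1$. So it vanishes there after all, and all mixed orders are fine. The same argument handles $pqr^i$, since $\zeta^{hL}$ then has order $pq$ exactly because $r^i\mid hL$.

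Finally, unsupportedness. For $0\le i\le e$, the factor $\Phi_{Qr^i}$ divides $\Phi_Q(X^h)$, because $\zeta_{Qr^i}^h$ is a primitive $Q$th root of unity when $i\le e$. The primes $p,q$ divide $|A|=QL$, and $r\mid |A|$ as well since $m\ge1$. However, $\Phi_p,\Phi_q\notin S_A$ and $\Phi_{r^i}\notin S_A$ for $i\le e$ (for $i=0$ there is no $r$-component). Hence each of these factors is unsupported.

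The main obstacle is exactness of the signatures, in particular verifying that $\Phi_Q(X^h)$ and $C_L$ introduce no prime-power zeros. This follows because $\Phi_Q(\zeta^h)\ne0$ whenever $\zeta$ has prime-power order, since then $\zeta^h$ does not have order $pq$.
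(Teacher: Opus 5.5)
Your proof is correct and follows essentially the paper's route. You use the telescoping identity $A(X)B_0(X)=[M]_X\Phi_Q(X^h)$ for the tiling, and you read the exact signatures, (T1), (T2), and unsupportedness off the cyclotomic factorizations $[Q]_{X^K}=\prod_{d\mid QK,\,d\nmid K}\Phi_d$, $[L]_{X^h}=\prod_{j=e+1}^{e+m}\Phi_{r^j}$, and $\Phi_Q(X^h)=\prod_{j=0}^{e}\Phi_{Qr^j}$, which the paper packages as one complete factorization of $A$. As cleanup, delete the bullet claiming that a primitive $pq$th root is not a zero of $[Q]_Y$; it is false, and you retract it. (T2) for $B_0$ is immediate from your own factorization $[Q]_{X^{hL}}=\prod_{d\mid QhL,\,d\nmid hL}\Phi_d$, since every mixed order built from $p,q,r^i$ with $i\le e$ divides $QhL$ but not $hL$.
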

\begin{proof}
The support of $[L]_{X^h}\Phi_Q(X^h)$ is $h\Ap(S;L)$ and lies in $[0,K)$. Its $Q$ translates by multiples of $K$ are disjoint and lie in $[0,M)$. The claim for $B_0$ follows directly from its separated intervals. Telescoping geometric series gives the ordinary identity
\begin{equation}\label{eq:familyproduct}
 A(X)B_0(X)=[M]_X\Phi_Q(X^h).
\end{equation}
For any integer polynomial $R$, $[M]_XR(X)\equiv R(1)[M]_X$ modulo $X^M-1$. Since $\Phi_Q(1)=1$, equation \eqref{eq:tiling} follows.

The complete cyclotomic factorization of $A$ is
\begin{equation}\label{eq:completefactor}
 A(X)=
 \prod_{\substack{s\mid M\\s\nmid K}}\Phi_s(X)
 \prod_{j=e+1}^{e+m}\Phi_{r^j}(X)
 \prod_{j=0}^{e}\Phi_{Qr^j}(X).
\end{equation}
Indeed $\Phi_Q(X^{r^e})=\prod_{j=0}^e\Phi_{Qr^j}(X)$. The first product contains precisely the divisors of $M$ with a $p^2$ or $q^2$ component. This proves the exact signature and all (T2) requirements for $A$. It also excludes the low components of the last $e+1$ factors, proving that they are unsupported. The signature and (T2) of $B_0$ follow either from its factorization or from the complete residue-system argument in Theorem~\ref{thm:complements} below.
\end{proof}

\subsection{The core complement rigidity}
\begin{lemma}[A binary additive matrix]
If a real matrix has the form $F(x,y)=u(x)+v(y)$ and every entry is $0$ or $1$, it depends on at most one variable.
\end{lemma}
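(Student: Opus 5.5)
The plan is to argue by contradiction with a two-by-two minor. Suppose $F(x,y)=u(x)+v(y)$ takes only the values $0$ and $1$. The key observation is that the additive form makes every "rectangle" difference vanish: for any $x,x'$ and $y,y'$,
\[
 F(x,y)-F(x',y)=u(x)-u(x')=F(x,y')-F(x',y').
\]
So a row difference does not depend on the column.

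Suppose $F$ depends on both variables. Then there are $x\ne x'$ with $u(x)\ne u(x')$, and $y\ne y'$ with $v(y)\ne v(y')$. Set $\delta=u(x)-u(x')$. Since the entries lie in $\{0,1\}$, we have $\delta\in\{-1,1\}$. After possibly swapping $x$ and $x'$, take $\delta=1$. Then $F(x,\cdot)=F(x',\cdot)+1$ everywhere, which forces $F(x',y)=F(x',y')=0$ and $F(x,y)=F(x,y')=1$.

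Now compute the column difference $v(y)-v(y')$ from row $x'$. It equals $F(x',y)-F(x',y')=0$. This contradicts the choice $v(y)\ne v(y')$. Hence at least one of $u$ and $v$ is constant, and $F$ depends on at most one variable.

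Equivalently, the argument can be phrased as follows. If some row difference is nonzero, it is $\pm1$ uniformly in the column. This pins the two rows to the constant vectors $0$ and $1$, which kills all column variation.

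There is no real obstacle here. The only care needed is to note that the differences are forced to lie in $\{-1,0,1\}$, and that a difference of $\pm1$ holding uniformly in the column makes both rows constant. The domain may be any product of sets, and it needs no finiteness.
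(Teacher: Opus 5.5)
Your proof is correct and follows essentially the same route as the paper's: a nonzero row difference $u(x)-u(x')$ is column-independent and equal to $\pm1$, which pins the two rows to the constant values $0$ and $1$ and forces $v$ to be constant. The only difference is that you phrase it as a contradiction using a single pair $y,y'$, whereas the paper concludes directly that $v$ is constant.
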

\begin{proof}
If $u(x_1)\ne u(x_2)$, the nonzero difference $F(x_1,y)-F(x_2,y)$ is independent of $y$. A nonzero difference between binary entries is $1$ or $-1$; the two rows must therefore be constant rows of opposite values. This forces $v$ to be constant. Otherwise $u$ was already constant.
\end{proof}

\begin{lemma}[All complements of the Ap\'ery core]\label{lem:corecomp}
In $\ZZ_{QL}$, all complements of $C_L=\Ap(S;L)$ are exactly
\[
 a+L\ZZ_{QL},\qquad 0\leq a<L.
\]
\end{lemma}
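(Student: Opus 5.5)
The plan is to prove the two inclusions separately. For sufficiency, let $H=L\ZZ_{QL}$, the subgroup of order $Q$. By Lemma~\ref{thm:apery}, $C_L$ is a complete residue system modulo $L$. So $C_L$ maps bijectively onto $\ZZ_{QL}/H\cong\ZZ_L$, and $C_L\oplus(a+H)=\ZZ_{QL}$ for every $a$.

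For necessity, let $B$ be any complement. Then $|B|=Q$, and from \eqref{eq:tiling} we have $C_L(\zeta)B(\zeta)=0$ for every $QL$-th root of unity $\zeta\ne1$. The degree bound in Lemma~\ref{thm:apery} makes the mask $C_L(X)=[L]_X\Phi_Q(X)$ an honest representative in $\br{QL}$. Hence $C_L(\zeta)=0$ exactly when the order $d$ of $\zeta$ satisfies either $1<d\mid L$ or $d=Q$.

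So $B(\zeta_d)=0$ for every $d\mid QL$ with $d\nmid L$ and $d\ne Q$. These are the orders $p$, $q$, $pr^j$, $qr^j$ and $Qr^j$ with $1\le j\le m$. The order $Q$ itself is not forced by this argument, and I will not need it.

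Next I pass to the CRT coordinates $\ZZ_{QL}\simeq\ZZ_Q\times\ZZ_L$, which is valid since $\gcd(Q,L)=1$ because $L=r^m$. A character has a $\ZZ_Q$-component and a $\ZZ_L$-component. The vanishing just obtained says that the Fourier transform of $\ind_B$ is zero at every character whose two components are both nontrivial. It also vanishes when the $\ZZ_L$-component is trivial and the $\ZZ_Q$-component has order $p$ or $q$.

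By Fourier inversion, the surviving characters are of two kinds: those trivial on $\ZZ_Q$, and those trivial on $\ZZ_L$. Therefore
\[
 \ind_B(x,y)=u(x)+v(y),\qquad x\in\ZZ_Q,\ y\in\ZZ_L,
\]
for real-valued functions $u,v$.

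The preceding binary additive matrix lemma now shows that $\ind_B$ depends on at most one variable. If it depends only on $x$, then $B=X\times\ZZ_L$ and $|B|=L|X|=Q$. This is impossible, because $L=r^m>1$ is coprime to $Q$ and so cannot divide $Q$.

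Hence $\ind_B$ depends only on $y$, so $B=\ZZ_Q\times Y$ with $|Y|=1$. In the original coordinates this is a single coset $a+H$ of the order-$Q$ subgroup, and we may take $0\le a<L$.

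The step needing the most care is the exact determination of the zero set of $C_L$. In particular, I must check that no order $Qr^j$ with $j\ge1$, and none of $p$, $q$, is a zero of $C_L$. This holds because $[L]_X$ vanishes only at orders dividing $L$, and $\Phi_Q$ vanishes only at order $Q$. The missing information at order $Q$ is harmless, since that character lies in the "$u(x)$" part of the decomposition.
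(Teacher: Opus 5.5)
Your proposal is correct and follows essentially the same route as the paper. You find the zeros of $[L]_X\Phi_Q(X)$, deduce that the Fourier support of $\ind_B$ lies on the two CRT axes of $\ZZ_Q\times\ZZ_L$, write $\ind_B=u(x)+v(y)$, and then apply the binary additive matrix lemma together with $\gcd(L,Q)=1$ to exclude dependence on the $\ZZ_Q$ coordinate alone; the remaining differences are cosmetic (swapped CRT factors, an explicit list of forced zero orders, and real-valuedness of $u,v$ left implicit where the paper obtains it by taking real parts).
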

\begin{proof}
The mask $C_L(X)=[L]_X\Phi_Q(X)$ has, among orders dividing $QL$, exactly the zeros of nontrivial order dividing $L$, and order $Q$. If $C_L\oplus E=\ZZ_{QL}$, the Fourier support of $\ind_E$ is therefore contained in the union of the two coordinate axes under
\[
 \ZZ_{QL}\simeq\ZZ_L\times\ZZ_Q.
\]
Indeed the Fourier transform of the convolution is zero at every nontrivial character, so a nonzero Fourier coefficient of $E$ must occur at a zero of $C_L$. Inverting on the two axes gives $\ind_E(x,y)=u(x)+v(y)$, with $u,v$ chosen real because the original function is real. By the preceding lemma, $\ind_E$ depends on one variable only. Dependence only on $y$ would make $|E|=Q$ a multiple of $L$, impossible since $L>1$ and $\gcd(L,Q)=1$. Thus it selects exactly one value of $x$. Conversely each indicated coset complements the complete residue system $C_L\bmod L$.
\end{proof}

\begin{theorem}[Complete complement parameterization]\label{thm:complements}
For the set $A$ in \eqref{eq:family}, every complement, and no other set, has the form
\begin{equation}\label{eq:allcomplements}
 B=\{t+ha_t+hLj+K\ell_{t,j}:0\leq t<h,\ 0\leq j<Q\},
\end{equation}
where all parameters are independent and
\[
 a_t\in\br L,\qquad \ell_{t,j}\in\br Q.
\]
Consequently the number of complements as subsets of $\ZZ_M$ is
\begin{equation}\label{eq:complementcount}
 L^hQ^{Qh}.
\end{equation}
Every such $B$ satisfies (T1), (T2), with the exact signature $S_B=S_{B_0}$. In the ring $\ZZ[X]$, the monic greatest common divisor of all their masks is
\begin{equation}\label{eq:gcdallB}
 \gcd_{A\oplus B=\ZZ_M}B(X)=[h]_X[Q]_{X^{hL}}=B_0(X).
\end{equation}
\end{theorem}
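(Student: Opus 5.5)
The plan is to reduce everything to the core rigidity of Lemma~\ref{lem:corecomp} through two layers of structure in $A$. First, $A\subset h\ZZ_M$: every exponent of $A(X)=[Q]_{X^K}[L]_{X^h}\Phi_Q(X^h)$ is a multiple of $h$, because $K=QhL$. So I write $A=hA'$ with $A'\subset\ZZ_{M/h}=\ZZ_{Q^2L}$ and $A'(X)=C_L(X)[Q]_{X^{QL}}$. As in the last part of the proof of Corollary~\ref{cor:primitivequotient}, split a putative complement by residues modulo $h$: $B=\bigcup_{t<h}(t+hE_t)$. Representations of residues $\equiv t\pmod h$ in $A+B$ correspond exactly to representations in $A'+E_t$. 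Hence $A\oplus B=\ZZ_M$ if and only if $A'\oplus E_t=\ZZ_{Q^2L}$ for every $t$, and the $E_t$ are independent.

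Second, let $\pi:\ZZ_{Q^2L}\to\ZZ_{QL}$ be reduction. Since $QL\cdot\br Q$ is exactly $\ker\pi$ and $C_L\subset[0,QL)$, we have $A'=\pi^{-1}(C_L)$. I will check directly that $\pi^{-1}(C_L)\oplus E=\ZZ_{Q^2L}$ holds if and only if $\pi|_E$ is injective and $C_L\oplus\pi(E)=\ZZ_{QL}$.
\begin{itemize}
\item Each element $x$ has exactly one representation $x=c+e$ with $c\in\pi^{-1}(C_L)$ iff $\pi(x)$ has exactly one representation $\pi(c)+\pi(e)$ and $e$ is determined by $\pi(e)$.
\item Equivalently, one can count: $|E|=Q$ and the fibres have size $Q$.
\end{itemize}
Lemma~\ref{lem:corecomp} then gives $\pi(E_t)=a_t+L\ZZ_{QL}$, so $E_t=\{a_t+Lj+QL\ell_{t,j}:0\le j<Q\}$ with free lifts $\ell_{t,j}\in\br Q$. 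Multiplying by $h$ and adding $t$ gives exactly \eqref{eq:allcomplements}, using $hQL=K$. Distinct parameter choices give distinct sets, since $t$, then $a_t$, then $j$ and $\ell_{t,j}$ are read off from residues modulo $h$, $hL$, $K$ and $M$. This yields the count $L^hQ^{Qh}$ in \eqref{eq:complementcount}.

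For the signature, Lemma~\ref{lem:allocation} together with the known $S_A$ from Theorem~\ref{thm:construction} forces $S_B=\{p,q\}\cup\{r,\dots,r^e\}=S_{B_0}$ for every complement. For (T1) and (T2), I will show that $B$ is a complete residue system modulo $Qh$. For fixed $t$, the numbers $h(a_t+Lj)$ with $j<Q$ run over all of $h\ZZ_Q$ modulo $Qh$, because $\gcd(L,Q)=1$. Hence $\Phi_s\mid B$ for every $1<s\mid Qh=pq\,r^e$. This covers every product of distinct prime powers from $S_B$, giving (T2), and (T1) follows from $|B|=Qh$.

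For \eqref{eq:gcdallB}, note that $B_0=\prod_{1<s\mid h}\Phi_s\prod_{s\mid K,\ s\nmid hL}\Phi_s$ is a product of distinct cyclotomic factors, all of orders dividing $K$. It therefore suffices to show that each of these $\Phi_s$ divides every $B(X)$, and this can be tested on $B\bmod X^K-1$.
\begin{itemize}
\item The first family divides $B$ by the complete-residue property modulo $Qh$.
\item For the second family, reduce modulo $X^K-1$. For fixed $t$, the reduction of $\{a_t+Lj\}$ modulo $QL$ is the coset $a_t+L\ZZ_{QL}$. So $B(X)\equiv\bigl(\sum_t X^{t+ha_t}\bigr)[Q]_{X^{hL}}\pmod{X^K-1}$, which is divisible by the second family.
\end{itemize}
Since $B_0$ is itself a complement (the case $a_t=\ell_{t,j}=0$), the monic gcd is exactly $B_0$.

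The main obstacle I expect is bookkeeping rather than ideas. I need to be sure that the fibre decomposition $A'=\pi^{-1}(C_L)$ is exact, which uses $\deg C_L<QL$ from Lemma~\ref{thm:apery}. I also need the gcd statement to be argued in $\ZZ[X]$ for ordinary masks rather than modulo $X^M-1$; this is handled by the remark that all relevant orders divide $K$, so reducing modulo $X^K-1$ detects them.
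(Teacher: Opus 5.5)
Your proposal is correct and follows essentially the paper's route. The paper quotients by the period subgroup $K\ZZ_M$ first and then slices by residues modulo $h$; you slice first and then pass to the fibres of $\ZZ_{Q^2L}\to\ZZ_{QL}$. Both end in Lemma~\ref{lem:corecomp}, and both use the complete residue system modulo $Qh$ and the projected mask $[Q]_{X^{hL}}\sum_t X^{t+ha_t}$ for the gcd. The only minor variation is that you get the exact signature from Lemma~\ref{lem:allocation} and the known $S_A$, whereas the paper uses the count $\prod_{s\in S_{B_0}}\Phi_s(1)=Qh=|B|$; both work.
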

\begin{proof}
The set $A$ is the full preimage of $hC_L\subset\ZZ_K$ under the quotient by $K\ZZ_M$. In a tiling complement, projection to $\ZZ_K$ is injective: two elements differing by a nonzero member of this period subgroup would give a repeated representation. The projected complement must complement $hC_L$ in $\ZZ_K$.

Split that complement according to its residues $t\bmod h$. In each residue class, division by $h$ gives a complement of $C_L$ in $\ZZ_{QL}$. Lemma~\ref{lem:corecomp} gives the form $a_t+L\br Q$. Each projected point then has an arbitrary lift by $K\ell_{t,j}$. This proves necessity, sufficiency, and uniqueness of the parameters, and hence \eqref{eq:complementcount}.

Modulo $Qh$, the $Q$ entries for each $t$ are all residues with that value modulo $h$, since $L$ is coprime to $Q$. Thus $B$ is a complete residue system modulo $Qh$. It vanishes at every nontrivial $Qh$th root and contains all prime-power factors in $S_{B_0}$. These already contribute $Qh=|B|$ at $X=1$. Any additional prime-power factor would contradict this equality. This proves the exact signature, (T1), and (T2).

For an order $d\mid K$, lifting does not affect evaluation at primitive $d$th roots, and the projected mask is
\[
 [Q]_{X^{hL}}\sum_{t=0}^{h-1}X^{t+ha_t}.
\]
The first factor vanishes whenever $d\mid K$ but $d\nmid hL$. If $d\mid h$, $d>1$, the second factor vanishes. These are exactly the cyclotomic factors of $B_0$. Thus $B_0$ divides every $B$. The choice $a_t=\ell_{t,j}=0$ gives $B=B_0$ itself, so no further factor or higher multiplicity can occur in the gcd.
\end{proof}

\begin{remark}[Quotient periodicity is not actual periodicity]
Formula \eqref{eq:allcomplements} says that the \emph{projection} of $B$ has the indicated fibers. Arbitrary independent lifts usually destroy periods of $B$ in $\ZZ_M$. Universal cyclotomic zeros are inherited, but a subgroup period need not be.
\end{remark}

\begin{corollary}[Three-prime existence at bounded exponents]\label{cor:uniform}
For every $p<q<r$, there is a tiling with unsupported $\Phi_{pqr}$ at period $p^2q^2r^3$, with both factors satisfying (T1) and (T2). The same construction works at period $(pqr)^2$ exactly when $r\in\langle p,q\rangle$. For a fixed admissible $m$, it also gives arbitrarily many unsupported factors while keeping $|A|=pqr^m$ fixed.
\end{corollary}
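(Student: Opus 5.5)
The plan is to specialize Theorem~\ref{thm:construction} to $e=1$, which makes $\Phi_{Q}$ and $\Phi_{Qr}=\Phi_{pqr}$ unsupported for $A$. The period is then
\[
M=QK=Q^2hL=p^2q^2r^{1+m},
\]
so the only work is choosing $m$ with $L=r^m\in\langle p,q\rangle$ and $m\le 2$.

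First I will show that $r^2\in\langle p,q\rangle$ for every $p<q<r$. The Frobenius number of $\langle p,q\rangle$ is $pq-p-q$, so every integer exceeding it lies in the semigroup. Since $r>q>p$, we have
\[
r^2>pq>pq-p-q,
\]
hence $r^2\in\langle p,q\rangle$. The same argument can be quoted from the uniqueness-of-representation remark after \eqref{eq:semigroup}, with residues modulo $p$. Taking $m=2$ gives period $p^2q^2r^3$. Theorem~\ref{thm:construction} already supplies the $0$--$1$ property, the tiling, (T1), (T2) for both factors, and the unsupportedness of $\Phi_{pqr}$. One check is needed: the unsupported definition requires $p,q,r$ to divide $|A|=pqr^2$, and they do.

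Second, the square period $(pqr)^2$ corresponds to $m=1$, $e=1$. Theorem~\ref{thm:construction} with $L=r$ applies exactly when $r\in\langle p,q\rangle$, since Lemma~\ref{thm:apery} makes $[r]_X\Phi_Q$ a $0$--$1$ mask precisely in that case. If "works" is read as "no such tiling exists at all otherwise", that direction is Corollary~\ref{cor:tiling-main} (via Theorem~\ref{thm:main}), which I would cite for the converse.

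Third, fix an admissible $m$ (for instance $m=2$) and let $e$ be arbitrary. Then $|A|=QL=pqr^m$ does not depend on $e$, and the factors $\Phi_{Qr^j}$ for $0\le j\le e$ are all unsupported. This gives $e+1$ unsupported factors, arbitrarily many as $e\to\infty$.

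The only step needing care is the Frobenius bound, which is elementary; everything else is bookkeeping of parameters in Theorem~\ref{thm:construction}.
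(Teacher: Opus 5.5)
Your proposal is correct and follows the paper's proof essentially verbatim: you specialize Theorem~\ref{thm:construction} with $e=1$ and $L=r^2$ (using $r^2>pq$ to place $r^2$ in $\langle p,q\rangle$), with $L=r$ for the square period, and you increase $e$ for arbitrarily many unsupported factors. The only difference is that you cite the Sylvester--Frobenius bound $pq-p-q$, whereas the paper proves the equivalent conductor bound $(p-1)(q-1)$ by a short residue argument; your optional appeal to Corollary~\ref{cor:tiling-main} for the stronger reading of ``exactly when'' is a harmless addition.
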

\begin{proof}
Every integer at least $(p-1)(q-1)$ belongs to $\langle p,q\rangle$, and $r^2>pq>(p-1)(q-1)$. For completeness, if $n\ge(p-1)(q-1)$, choose $b\in\br p$ with $bq\equiv n\pmod p$. Then $n-bq> -p$ and is a multiple of $p$, so it is nonnegative. Thus $n\in\langle p,q\rangle$. Taking $L=r^2$ and $e=1$ proves the first assertion; $L=r$ gives the second. The last assertion follows by increasing $e$ in Theorem~\ref{thm:construction}.
\end{proof}
\section{Sharp small periods and explicit examples}\label{sec:examples}
For $p,q,r=2,3,5$, one has
\[
 C_5=\Ap(\langle2,3\rangle;5)=\{0,2,3,4,6\}.
\]
Modulo $6$, this has the multiset of residues $\{0,0,2,3,4\}$, the union of a $2$-cycle and a $3$-cycle, and modulo $5$ it is a complete residue system. Thus it is a concrete representative of the configuration in \cite[Example 8.1]{KLMS}. The core itself has no unsupported $\Phi_6$, since $2,3\nmid|C_5|$; the following lifts provide the missing cardinality factors.

Taking $m=1,e=0$ in \eqref{eq:family} gives
\[
 M=180,\quad A=\{30j+c:0\le j<6,\ c\in C_5\},\quad B_0=5\br6,
\]
with $S_A=\{4,5,9\}$ and unsupported $\Phi_6$. Taking $m=e=1$ gives
\begin{equation}\label{eq:basic900}
 M=900,\qquad A=\{150j+5c:0\le j<6,\ c\in C_5\}.
\end{equation}
Here $S_A=\{4,9,25\}$ and both $\Phi_6$ and $\Phi_{30}$ are unsupported. The identity
\[
 1+Y^2+Y^3+Y^4+Y^6=\Phi_5(Y)\Phi_6(Y)
\]
gives an immediate direct check of the core. Every complement of \eqref{eq:basic900} is exactly
\begin{equation}\label{eq:900comp}
 \{t+5a_t+25j+150\ell_{t,j}:0\le t<5,\ 0\le j<6\},
 \quad a_t\in\br5,\quad\ell_{t,j}\in\br6.
\end{equation}
There are $5^5 6^{30}$ such complements.

\begin{proposition}[Least cyclic periods]\label{prop:minima}
Among tilings $A\oplus B=\ZZ_M$ with $A$ having an unsupported divisor $\Phi_s$ with $s\mid M$, the least possible $M$ is $180$. If $s$ is required to contain three distinct primes, the least possible $M$ is $900$, and the least possible $|A|$ is $30$.
\end{proposition}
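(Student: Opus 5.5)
Existence is already in hand: the $m=1$, $e=0$ instance of Theorem~\ref{thm:construction} at $(p,q,r)=(2,3,5)$ gives a tiling of $\ZZ_{180}$ with unsupported $\Phi_6$, and the $m=e=1$ instance gives \eqref{eq:basic900} in $\ZZ_{900}$ with unsupported $\Phi_{30}$ and $|A|=30$. The proof therefore reduces to three lower bounds. The common input is Lemma~\ref{lem:allocation}. Suppose $\Phi_s$ is unsupported for $A$ in a tiling of $\ZZ_M$, with $s\mid M$. For each prime $p\mid s$ with $p^\alpha\parallel s$, we have $p\mid|A|$, so at least one level $\Phi_{p^j}$, $j\le v_p(M)$, divides $A$. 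The level $j=\alpha$ is forbidden. Moreover $\Phi_s\mid A$ together with $\Phi_{p^\alpha}\nmid A$ should force $\Phi_{p^\alpha}\mid B$: evaluating the tiling identity at a primitive $p^\alpha$th root shows that one of the two masks vanishes there. So $B$ takes at least one level of $p$, and $A$ takes another. Hence $v_p(M)\ge2$ for every prime $p\mid s$, and also $p\mid|B|$.

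\emph{Period $180$.} First, $s$ cannot be a prime power, since then the condition ``$\Phi_{p^\alpha}\nmid A$ for $p^\alpha\parallel s$'' contradicts $\Phi_s\mid A$. So $s$ has at least two primes $p,q$ and $p^2q^2\mid M$. By \cite[Theorem 1.12]{KLMS}, unsupported divisors do not occur when $M$ has at most two distinct prime factors, so $M$ has a third prime factor. The smallest candidate is $M=4\cdot9\cdot5=180$, and the next ones to compare are $4\cdot9\cdot7=252$, $4\cdot25\cdot3=300$, and so on. This gives $M\ge180$.

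\emph{Three primes in $s$.} Here $M$ is divisible by $p^2q^2r^2$ with $p<q<r$, which forces $M\ge900$, attained only at $M=900$ with $(p,q,r)=(2,3,5)$. For $|A|$, we need $pqr\mid|A|$, so $|A|\ge30$; equality is attained.

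The step I expect to be the obstacle is justifying rigorously that $B$ must absorb the level $p^\alpha$. The tiling identity only gives vanishing of $A(\zeta)B(\zeta)$ at $\zeta$ of order $p^\alpha$ modulo $M$. Since $\Phi_{p^\alpha}\nmid A$, this forces $\Phi_{p^\alpha}\mid B$, and Lemma~\ref{lem:allocation} then forbids $A$ from also holding that level. The further requirement that $A$ hold some level of $p$, coming from $p\mid|A|$ via allocation, gives a second distinct level. A second point to check is that $M$ is genuinely required to have a third prime. Here I would cite \cite[Theorem 1.12]{KLMS} directly rather than reprove it, and confirm that its hypotheses refer to the period $M$ and not to $|A|$.
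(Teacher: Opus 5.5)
Your proposal is correct and follows the paper's proof essentially step for step. Each prime $p$ of $s$ has an excluded level $\alpha\le v_p(M)$ and a different level held by $A$, so Lemma~\ref{lem:allocation} forces $v_p(M)\ge2$. The two-prime exclusion then forces a third prime in $M$, and the constructions at $180$ and $900$ attain all three bounds. Your observation that $B$ absorbs $\Phi_{p^\alpha}$ is valid but not needed.

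On your second worry, the paper adds one bridge that you should include. If $M$ had at most two prime factors, then so would $|A|$, since $|A|\mid M$. Coven--Meyerowitz then gives (T2) for $A$, and this is what places $A$ within the scope of Theorem~1.12 of Kiss, \L aba, Marshall and Somlai, which concerns sets satisfying (T1) and (T2).
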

\begin{proof}
An unsupported order contains at least two primes. Each of its primes requires an excluded prime-power level and a different supported level: the first comes from the definition, and the second from (T1), which is necessary for a cyclic tile. By Lemma~\ref{lem:allocation}, both levels are within the specified period. Hence each such prime occurs in $M$ to exponent at least two. If $M$ had at most two distinct prime factors, then $|A|$ would also, so the Coven--Meyerowitz theorem \cite{CM} would give (T2). The two-prime exclusion of \cite[Theorem 1.12]{KLMS} would then rule out an unsupported divisor. Thus $M$ has at least one further prime, and $M\ge2^2 3^2 5=180$. If the unsupported order has at least three primes, the corresponding bound is $M\ge2^2 3^2 5^2=900$, and the cardinality condition gives $|A|\ge30$. The preceding examples attain all these bounds.
\end{proof}

Theorem~\ref{thm:main} does not make the minimal example unique. Appendix~\ref{app:900} classifies all members of $\E(2,3,5)$ by their layers and counts them exactly. In particular, most do not have the period subgroup of the example \eqref{eq:basic900}.
\subsection{An explicit aperiodic minimal example}
In \eqref{eq:layerform}, take the following layers:
\begin{center}
\begin{tabular}{ccl}
\toprule
$w$ & $f_w$ & $g_w$\\
\midrule
$0$ & $(0,0,1)$ & $(0,0)$\\
$1$ & $(0,1,1)$ & $(0,0)$\\
$2$ & $(0,0,0)$ & $(0,1)$\\
$3$ & $(0,0,0)$ & $(0,1)$\\
$4$ & $(0,0,0)$ & $(2,2)$\\
\bottomrule
\end{tabular}
\end{center}
Writing $\zeta=\zeta_3$, their character values sum to $2-2-3\zeta^2-3\zeta^2+6\zeta^2=0$.
The standard CRT inverse is
\[
 (x,y,z)\longmapsto225x+100y+576z\pmod{900}.
\]
It gives
\begin{align*}
 A=\{&0,10,20,60,105,170,180,190,240,255,\\
 &300,310,320,360,375,450,470,490,540,555,\\
 &610,620,630,660,705,750,770,790,825,840\}.
\end{align*}
A complement is the inverse image of $\br2\times\br3\times\br5$:
\begin{align*}
 B_\square=\{&0,1,28,29,100,101,128,153,200,225,\\
 &252,253,325,352,353,425,452,477,504,576,\\
 &577,604,676,677,704,729,776,801,828,829\}.
\end{align*}
Both stabilizers are trivial. For $A$, the nonconstant $g$ in layer $2$ breaks the order-$2$ period; the nonconstant $f$ in layer $0$ breaks the order-$3$ period; the different layers break the order-$5$ period. The standard complement is aperiodic by its product-interval form. They satisfy
\[
 A\oplus B_\square=\ZZ_{900},\qquad
 S_A=\{4,9,25\},\quad S_{B_\square}=\{2,3,5\},\quad\Phi_{30}\mid A.
\]
Since affine automorphisms preserve the stabilizer size, this set is not affinely equivalent to the periodic example \eqref{eq:basic900}. The aperiodic example is still contained in $5\ZZ_{900}$ and is not primitive.

\subsection{The lower modulus 180 also allows both factors aperiodic}\label{sec:180aperiodic}
Divide the preceding $A$ by $5$ and take the zero-modulo-$5$ slice of $B_\square$, also divided by $5$. This gives
\begin{align*}
 C=\{&0,2,4,12,21,34,36,38,48,51,60,62,64,72,75,\\
     &90,94,98,108,111,122,124,126,132,141,150,154,158,165,168\},\\
 D=\{&0,20,40,45,65,85\}.
\end{align*}
They tile $\ZZ_{180}$, with $S_C=\{4,5,9\}$ and $S_D=\{2,3\}$, and $\Phi_6$ is unsupported for $C$. Any period of $C$ would lift to a period of $A$, so $C$ is aperiodic. In CRT coordinates $\ZZ_4\times\ZZ_9\times\ZZ_5$, the complement is
\[
 D=\{0,1\}\times\{0,2,4\}\times\{0\},
\]
which is aperiodic as well. Moreover $C$ is primitive, since it contains $0,2,21$. Thus the least modulus $180$ for any unsupported order can be attained with both factors aperiodic and with the unsupported factor's set primitive. For a three-prime unsupported order itself, the least modulus $900$ can be attained with both factors aperiodic. Every $A$ with the specific unsupported order $30$ at this modulus is necessarily nonprimitive.
\section*{Acknowledgments}

The authors thank the authors of the works cited in this paper for the
questions and ideas that motivated the present study. OpenAI's GPT Astra
was used during the preparation of the manuscript for language editing
and as a research aid in exploring and identifying useful examples.
All mathematical statements, constructions, and proofs were independently
verified by the authors, who take full responsibility for the contents
of the paper.

The computational checks, including those implemented in several Python files mentioned in Appendix \ref{app:900}, supplement the proofs and can be reproduced using the verification code available in the \href{https://github.com/TanHu1999/Unsupported-Cyclotomic-Divisors-in-Three-Prime-Integer-Tilings}{\textcolor{linkblue}{public GitHub repository}}.

\appendix
\section{Complete enumeration at modulus 900}\label{app:900}
For $p,q,r=2,3,5$, Corollary~\ref{cor:layerform} is a finite complete classification. For each of the five high-$5$ layers, choose
\[
 f\in\{0,1\}^3,\qquad g\in\{0,1,2\}^2,
 \qquad f\text{ constant or }g\text{ constant}.
\]
There are $2\cdot3^2+3\cdot2^3-2\cdot3=36$ distinct layer choices. Their only remaining constraint is the exact zero sum \eqref{eq:layerzero}. This is a necessary-and-sufficient description of \emph{all} the sets in question; it does not presuppose periodicity or a particular polynomial factorization.

\subsection{The constant-term certificate}
Put $\zeta=\zeta_3$. A layer character value is one of
\[
 \pm6\zeta^j\ (\text{multiplicity }1),\quad
 \pm3\zeta^j\ (\text{multiplicity }2),\quad
 \pm2\zeta^j\ (\text{multiplicity }3),\qquad j=0,1,2.
\]
For example, if both low vectors are constant the value is $\pm6\zeta^j$. If only $f$ is constant, two distinct $g$ values sum to the negative of the third root. If only $g$ is constant, the three signs have sum $\pm1$.

In the integer basis $(1,\zeta)$, define
\[
 K_s(U,V)=U^s+U^{-s}+V^s+V^{-s}+(UV)^s+(UV)^{-s},\qquad
 F=K_6+2K_3+3K_2.
\]
Then the number of valid ordered five-layer choices for a fixed low-$5$ residue is
\begin{equation}\label{eq:cttotal}
 \CT F^5=366\,960.
\end{equation}
A period of order $2$ is equivalent to every $g_w$ being constant; a period of order $3$ is equivalent to every $f_w$ being constant. Thus the other counts are
\begin{align}
 \CT(K_6+3K_2)^5&=146\,160,\label{eq:ct2}\\
 \CT(K_6+2K_3)^5&=46\,200,\label{eq:ct3}\\
 \CT K_6^5&=360.\label{eq:ct6}
\end{align}
No order-$5$ period is possible: it would make all layers identical, while each single-layer character value is nonzero. A stabilizer order divides $|A|=30$, so no other orders need be considered. Inclusion--exclusion, followed by multiplication by the five choices of the common low-$5$ digit, gives:
\begin{center}
\begin{tabular}{rrr}
\toprule
Stabilizer order & One fixed residue modulo $5$ & All five residues\\
\midrule
$1$ & $174\,960$ & $874\,800$\\
$2$ & $145\,800$ & $729\,000$\\
$3$ & $45\,840$ & $229\,200$\\
$6$ & $360$ & $1\,800$\\
\midrule
Total & $366\,960$ & $1\,834\,800$\\
\bottomrule
\end{tabular}
\end{center}
These are counts of distinct subsets of $\ZZ_{900}$, not counts modulo affine equivalence.

The integer constants in \eqref{eq:cttotal}--\eqref{eq:ct6} are checked by two independent finite expansions in the repository's \texttt{verify.py} and in the supplementary \texttt{verification.py}: a coefficient-convolution recurrence and an unordered five-term enumeration weighted by multinomial coefficients. Equivalently, for the coefficient function $c$ of any indicated Laurent polynomial, set $D_0(0,0)=1$ and $D_0(x,y)=0$ for $(x,y)\ne(0,0)$, and use
\[
 D_{k+1}(x,y)=\sum_{(a,b)}c(a,b)D_k(x-a,y-b).
\]
The desired constant is $D_5(0,0)$. The self-contained supplementary file \texttt{verification.py} uses only the Python standard library and is run with \texttt{python3 verification.py}. Besides the two constant-term computations, it independently counts actual ordered layers, enumerates all complements of each of the $360$ cores below, and checks the explicit tiling examples by exact integer arithmetic. The structural classification and absence of other stabilizer types are proved above, not inferred from enumeration.

\subsection{The periodic core subclass}
For completeness, the order-$6$-periodic subfamily consists exactly of the sets
\[
 A_{a,C}=a+\{150j+5c:0\le j<6,\ c\in C\},\qquad 0\le a<5,
\]
where $C\subset\ZZ_{30}$ is a five-point complete residue system modulo $5$ satisfying $\Phi_6\mid C$. Indeed, quotienting by the order-$6$ period subgroup, then subtracting the common residue modulo $5$ and dividing by $5$, gives precisely such a core; the converse follows from the layer form. There are $360$ such cores.

To see the count directly, let $c_j$ be the six residue counts modulo $6$. Since $\zeta_6=-\zeta_3^2$, the zero condition implies that the difference between the even and odd counts in each of the three $\zeta_3$ phases is a common integer $k$. The total is five, so $k=1$ or $-1$. In the majority parity, the three counts are $(2,1,1)$; in the minority parity they are $(1,0,0)$, with its nonzero position opposite the doubled residue. There are six such residue patterns. Assigning the five distinct residue labels modulo $5$ to a pattern gives $5!/2=60$ cores. Hence the count is $6\cdot60=360$.

Under affine automorphisms $x\mapsto ux+v$ of $\ZZ_{30}$, with $\gcd(u,30)=1$, these cores have two orbits, of sizes $120$ and $240$, represented by
\[
 \{0,2,3,4,6\},\qquad \{0,1,3,9,17\}.
\]
One description of the distinction is whether the unique minority-parity point, viewed modulo $5$, is the midpoint of the two points sharing the doubled residue modulo $6$. Affine transformations preserve this property. After normalizing the doubled residue modulo $6$ and its two labels modulo $5$, the minority label is either the midpoint or one of the two remaining labels; the available reflections identify the latter two possibilities. The remaining two singletons may be exchanged independently in the modulo-$6$ coordinate. This proves there are exactly two orbits. Their sizes also follow from the ratio $1:2$ of midpoint and nonmidpoint choices, and are independently checked in the program.

Both orbit representatives, and hence all $360$ cores, have among nontrivial divisor orders of $30$ exactly the zeros $5$ and $6$. To exclude orders $10,15,30$ without factoring, fix $d\in\{2,3,6\}$ and index the points of a core as $c_t\equiv t\pmod5$. Vanishing at order $5d$ is equivalent to
\[
 \sum_{t=0}^4\zeta_d^{c_t}\zeta_5^t=0,
\]
since $x\mapsto\zeta_d^x\zeta_5^x$ has order $5d$. The minimal polynomial of $\zeta_5$ over $\QQ(\zeta_d)$ is $1+X+X^2+X^3+X^4$: the field extension has degree $4$ because $\gcd(d,5)=1$. Therefore the five coefficients $\zeta_d^{c_t}$ would all have to be equal, forcing the points to lie in one residue class modulo $d$. The five-point residue pattern modulo $6$ established above rules this out for each $d$. Orders $2,3$ are excluded by the cardinality $5$. The Fourier proof of Lemma~\ref{lem:corecomp} therefore applies to every such core. Consequently all $1\,800$ order-$6$-periodic members of $\E(2,3,5)$ have exactly the same complement family \eqref{eq:900comp}: for fixed $a$, slicing a complement by its residues modulo $5$ again gives the same coset complements of $C$.

\end{document}